\theoremstyle{plain}
\newcommand{\PP}{\mathbb{P}}
\newcommand{\RR}{\mathbb{R}}
\newtheorem{definition}{\sc Definition}[section]
\newtheorem{teo}{\sc Theorem}[section]
\newtheorem{prop}{\sc Proposition}[section]
\newtheorem{obs}{\sc Remark}[section]
\newcommand{\supp}[1]{\mbox{{\rm supp\/}}#1}
\renewcommand{\a}{\alpha}
\renewcommand{\b}{\beta}
\renewcommand{\d}{\delta}
\newcommand{\g}{\gamma}
\renewcommand{\l}{\lambda}
\renewcommand{\L}{\Lambda}
\renewcommand{\O}{\Omega}
\newcommand{\D}{\Delta}
\newcommand{\s}{\sigma}
\begin{document}

\title[Markov-type inequalities and duality in weighted Sobolev spaces]{Markov-type inequalities and duality in weighted Sobolev spaces}

\author[F. Marcell\'an]{Francisco Marcell\'an$^{(1)}$}
\address{Instituto de Ciencias Matem\'aticas (ICMAT),
Campus de Cantoblanco, Universidad Aut\'onoma de Madrid, Calle Nicol\'as Cabrera, no. 13-15, 28049 Madrid, Spain. \\
Departamento de Matem\'aticas, Universidad Carlos III de Madrid, Avenida de la Universidad 30, 28911 Legan\'es, Madrid,
Spain} \email{pacomarc@ing.uc3m.es}
\thanks{$(1)\,\,\,$ Supported in part by a
grant from Ministerio de Econom{\'\i}a y Competitividad (MTM2012-36732-C03-01), Spain.}

\author[Y. Quintana]{Yamilet Quintana$^{(1)}$}
\address{Departamento de Matem\'aticas Puras y Aplicadas,
Edificio Matem\'aticas y Sistemas (MYS), Apartado Postal: 89000, Caracas 1080 A, Universidad Sim\'on Bol\'{\i}var,
Venezuela} \email{yquintana@usb.ve}

\author[J.M. Rodr{\'\i}guez]{Jos\'e M. Rodr{\'\i}guez$^{(2)}$}
\address{Departamento de Matem\'aticas, Universidad Carlos III de Madrid,
Avenida de la Universidad 30, 28911 Legan\'es, Madrid, Spain} \email{jomaro@math.uc3m.es}
\thanks{$(2)\,\,\,$ Supported in part by a
grant from Ministerio de Econom{\'\i}a y Competitividad (MTM2013-46374-P), Spain.}

\date{\today.\\ {\rm 2010 AMS Subject Classification: 33C45, 41A17, 26C99.} }

\begin{abstract}
The aim of this paper is to provide Markov-type inequalities in the setting of weighted Sobolev spaces when the
considered weights are generalized classical weights. Also, as results of independent interest, some basic facts about Sobolev spaces with respect to certain vector measures are stated.
\\

\hspace{-.6cm} {\it Key words and phrases:} Extremal problems; Markov-type inequality; weighted Sobolev norm; weighted
$L^{2}$-norm; duality.\\

\end{abstract}

\maketitle{}

\markboth{F. MARCELL\'AN, Y. QUINTANA AND J.M. RODR\'IGUEZ \mbox{} }{Markov-type inequalities in weighted Sobolev
spaces}

\section{Introduction}

\label{[Section-1]-Introduction}

Given a norm on the linear space $\PP$ of polynomials with real coefficients, the so-called Markov-type inequalities are estimates connecting the norm of derivatives of a polynomial with the norm of the polynomial itself. These inequalities are interesting by themselves and play a  fundamental role in the proof of many inverse theorems in polynomial approximation theory (cf. \cite{milo,PQ2011} and the references therein).\\

It is well known that for every polynomial $P$ of degree at most $n$, the Markov inequality
$$\|P^{\prime}\|_{L^{\infty}([-1,1])}\leq n^{2} \|P\|_{L^{\infty}([-1,1])}$$
holds and it is optimal since you have equality for the Chebyshev polynomials of the first kind.\\

The above inequality has been extended to the $p$ norm ($p\geq 1$) in \cite{HST}. For every polynomial $P$ of degree at most $n$ their result reads
$$\|P^{\prime}\|_{L^{p}([-1,1])}\leq C(n,p) n^{2} \|P\|_{L^{p}([-1,1])}$$
where the value of $C(n,p)$ is explicitly given in terms of $p$ and $n$. Indeed, you have a bound  $C(n,p) \leq 6e ^{1+ 1/e}$ for $n>0$ and $p\geq 1$. In \cite{Goe} admissible values for $C(n,p)$ and some computational results for $p=2$ are deduced. Notice that for any $p>1$ and every polynomial $P$ of degree at most $n$
$$\|P^{\prime}\|_{L^{p}([-1,1])}\leq C n^{2} \|P\|_{L^{p}([-1,1])},$$
where $C$ is explicitly given and it is less than the constant $C(n,p)$  in \cite{HST}.\\

On the other hand, using matrix analysis, in \cite{Dor} it is proved that the exact value of $C(n,2)$ is the greatest singular value of the matrix $A_{n}=[a_{j,k}]_{0\leq j\leq n-1, 0\leq k\leq n}$, where $a_{j,k}= \int_{-1}^{1} p'_{j}(x) p_{k} (x) dx$ and $\{p_{n}\}_{n=0}^{\infty}$ is the sequence of orthonormal Legendre polynomials. A simple proof of this result, with an interpretation of the the sharp constant $C(n,2)$ as the largest positive zero of a polynomial as well as an explicit expression of the extremal polynomial (the polynomial such that the inequality becomes an equality) in the $L^{2}$- Markov inequality appears in \cite{Kroo}.\\

If you consider weighted $L^{2}$-spaces, the problem becomes more difficult. For instance, let $\|\,\cdot\,\|_{L^{2}((a,b), w)}$ be a weighted $L^{2}$-norm on $\PP$, given by
$$\|P\|_{L^{2}((a,b), w)}=\left(\int_{a}^{b}|P(x)|^{2} w(x)dx\right)^{1/2},$$
where $w$ is an integrable function on $(a, b)$, $-\infty \leq a<b\leq \infty$, such that $w> 0$ a.e. on $(a,b)$ and all
moments
$$r_{n}:= \int_{a}^{b}x^{n}w(x)dx, \qquad n\ge 0,$$
are finite.
It is clear that there exists a constant $\gamma_{n}= \gamma_{n}(a,b,w)$ such that
\begin{equation}
\label{mark5} \|P^{\prime}\|_{L^{2}((a,b), w)}\leq \gamma_{n}\|P\|_{L^{2}((a,b), w)}, \mbox{ for all } P\in \PP_{n},
\end{equation}
where $\PP_{n}$ is the space of polynomials with real coefficients of degree at most $n$. Indeed, the sharp constant is the greatest singular value of the matrix $B_{n}= [b_{j,k}]_{0\leq j\leq n-1,0\leq k\leq n}$, where $b_{j,k}= \int_{-1}^{1} p'_{j}(x) p_{k} (x) w(x)dx$ and $\{p_{n}\}_{n=0}^{\infty}$ is the orthonormal polynomial system with respect to the positive measure $w(x)dx$. Thus, from a computational point of view you need to find the connection coefficients between the sequences  $\{p'_{n}\}_{n=0}^{\infty}$ and $\{p_{n}\}_{n=0}^{\infty}$ in order to proceed with the computation of the matrix, and in a second step, to find the greatest singular value of the matrix $B_{n}.$ Notice that for classical weights (Jacobi, Laguerre and Hermite), such connection coefficients can be found in a simple way.\\

Mirsky \cite{mirsky} showed that the best
constant $\gamma_{n}^{*}:= \sup_{P\in \PP_{n}}\{||P^{\prime}||_{L^{2}((a,b), w)}:||P||_{L^{2}((a,b), w)}=1 \} $ in (\ref{mark5}) satisfies
\begin{equation}
\label{mark6} \gamma_{n}^{*} \leq
 \left(\sum_{\nu=1}^{n}\nu\|p_{\nu}^{\prime}\|^{2}_{L^{2}((a,b),
w)}\right)^{\!\!1/2}.
\end{equation}

Notice that the main interest of the above result is however qualitative, since the bound specified by $(\ref{mark6})$ can
be very  crude. In fact, when $w(x)= e^{-x^{2}}$ on $(-\infty,\infty)$, the estimate $(\ref{mark6})$ becomes
$$
\gamma_{n}^{*}\leq
 \left(\sum_{\nu=1}^{n}2\nu^{2}\right)^{\!\!1/2}=\sqrt{\frac{1}{3}n(n+1)(2n+1)}=O\big(n^{3/2}\big).
 $$
The contrast between this estimate and the classic result of Schmidt \cite{schmidt}, which establishes  $
\gamma_{n}^{*}=\sqrt{2n}$,  is evident.\\


Also, when we consider  the weighted $L^{2}$-norm associated with  the Laguerre weight $w(x):= x^\a e^{-x}$ in $[0,\infty)$, the
results in \cite{ADK} give the following inequality
\begin{equation}
\label{mark7} \|P'\|_{L^2(w)} \le C_\a n\, \|P\|_{L^2(w)}, \mbox{ for all } P\in \PP_{n}.
\end{equation}
\\
Notice that the nature of the extremal problems associated to the inequalities (\ref{mark5}) and (\ref{mark7}) is
different, since in the first case the constant on the right-hand side of  (\ref{mark5}) depends on $n$, while in the
second one the multiplicative constant $C_\a$ on the right-hand side of  (\ref{mark7})  is independent of $n$.\\

There exist a lot of results on Markov-type inequalities (see, e.g. \cite{DK,DMS,milo}, and the references therein). In connection with the research in the field of the weighted approximation by polynomials, Markov-type inequalities have been proved for various weights, norms, sets over which the norm is taken (see, e.g. \cite{nai} and the references therein) and more recently, the study of asymptotic behavior of the sharp constant involved in some kind of these inequalities have been done in \cite{ADK} for Hermite, Laguerre and Gegenbauer weights and in \cite{ADK2} for Jacobi weights with parameters satisfying some constraints.\\

On the other hand, a similar problem connected with the Markov-Bernstein inequality has been analyzed in \cite{GM} when you try to determine the sharp constant $C(n,m;w)$ such that
\begin{equation}
\|A ^{m/2} P^{(m)}\|_{L^{2}((a,b), w)}\leq C(n,m;w) \|P\|_{L^{2}((a,b), w)}, \mbox{ for all } P\in \PP_{n}.
\end{equation}

Here $w$ is a classical weight satisfying a Pearson equation $(A(x)w(x))'= B(x) w(x)$ and $A, B$  are polynomials of degree at most 2 and 1, respectively.\\

An analogue of the Markov-Bernstein inequality for linear operators $T$ from $\PP_{n}$ into  $\PP$ has been studied in \cite{kwon} in terms of singular values of matrices. Some illustrative examples when $T$ is either the derivative or the difference operator and you deal with some classical weights (Laguerre, Gegenbauer in the first case, Charlier, Meixner in the second one) are shown. Another recent application of Markov-Bernstein-type inequalities can be found in \cite{bun}.

\medskip

With these ideas in mind, one of the authors of the present paper  posed in 2008 during a conference on Constructive
Theory of Functions held in Campos do Jord\~{a}o, Brazil, the following problem: Find the analogous of Markov-type
inequalities in the setting of weighted Sobolev spaces. A partial answer of this problem
was given in \cite{PQ2011}, considering an extremal problem with similar conditions to those given by Mirsky, and
following the scheme of Kwon and Lee \cite{kwon}, mainly.

\medskip

The first part of this paper is devoted to  provide another partial solution of the above problem, which is based on the adequate use of inequalities of kind  (\ref{mark7}) \cite{ADK,DK,schmidt}, in the setting of weighted Sobolev spaces, when the considered weights are generalized classical weights.
In the second part we study some basic facts about Sobolev spaces with respect to measures:
separability, reflexivity, uniform convexity and duality, which to the best of our knowledge are not available in the current literature.
These Sobolev spaces appear in a natural way and are a very useful tool when we study the asymptotic behavior of Sobolev orthogonal polynomials
(see \cite{CPRR}, \cite{LPP}, \cite{LP}, \cite{R}, \cite{R2}, \cite{R3}, \cite{RARP2}, \cite{RS}).

\medskip

The outline of the paper is as follows. The first part of Section \ref{[Section-2]-Stype} provides some short
background about Markov-type inequalities in $L^2$ spaces with classical weights and the second one deals with a
Markov-type inequality corresponding to each weighted Sobolev norm with respect to these classical weights and to some
generalized weights (see Theorem \ref{Main_result}).
Section \ref{[Section-3]-Basic} contains definitions and a discussion about the
appropriate vector measures which we will need in order to get completeness of our Sobolev spaces with respect to
measures.
Finally, Section \ref{[Section-4]-Dual} contains some basic results on Sobolev spaces with
respect to the vector measures defined in the previous section (see Theorems \ref{t:reflexive} and \ref{t:dual}):
separability, reflexivity, uniform convexity and duality.

\section{Markov-type inequalities in Sobolev spaces with weights}
\label{[Section-2]-Stype}

The following proposition summarizes the Markov-type inequalities in $L^2$ spaces with classical weights, which will be  used in the sequel.
Recall that we denote by $\PP_n$ the linear space of polynomials with real coefficients and degree less than or equal to $n$.

\medskip

\begin{prop}
\label{prop1}
The following inequalities are satisfied.

\begin{enumerate}
\item Laguerre case \cite{ADK}:
$$
\|P'\|_{L^2(w)} \le C_\a n\, \|P\|_{L^2(w)},
$$
where $w(x):= x^\a e^{-x}$ in $[0,\infty)$, $\alpha>-1$ and  $P \in \PP_n$.

\item Generalized Hermite case \cite{schmidt}, \cite{DK}:
$$
\|P'\|_{L^2(w)} \le \sqrt{2n}\; \|P\|_{L^2(w)},
$$
where $w(x):= |x|^{\a} e^{-x^2}$ in $\RR$, $\a \ge 0$ and $P \in \PP_n$.

\item Jacobi case \cite{schmidt} (see also \cite{DMS}):
$$
\|P'\|_{L^2(w)} \le C_{\a,\b}\, n^2\, \|P\|_{L^2(w)},
$$
where $w(x):= (1-x)^{\a} (1+x)^{\b}$ in $[-1,1]$, $\alpha,\beta>-1$  and $P \in \PP_n$.
\end{enumerate}

\medskip
The multiplicative constants $C_{\a}$ and $C_{\a,\b} $ are independent of $n$.
\end{prop}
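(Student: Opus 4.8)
The plan is to obtain all three inequalities from the corresponding sharp-constant results already available in the literature, unified by one reduction. First I would fix a weight $w$ from the list and introduce the orthonormal polynomial system $\{p_k\}_{k\ge 0}$ associated with the measure $w(x)\,dx$, so that for $P=\sum_{k=0}^{n}c_k p_k\in\PP_n$ one has $\|P\|_{L^2(w)}^2=\sum_{k=0}^{n}c_k^2$. Using the classical formula expressing $p_k'$ as a linear combination of $p_0,\dots,p_{k-1}$, I would write $P'=\sum_{j=0}^{n-1}\big(\sum_{k=j+1}^{n} d_{j,k}\,c_k\big)p_j$; then the best constant in each inequality is exactly the largest singular value of the matrix $D_n=[d_{j,k}]_{0\le j\le n-1,\,0\le k\le n}$, so the whole problem becomes an estimate of $\|D_n\|$ as a function of $n$.

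Next I would treat the three cases. For the generalized Hermite weight $w(x)=|x|^{\a}e^{-x^2}$ the matrix $D_n$ is essentially diagonal: when $\a=0$ one has $p_k'=\sqrt{2k}\,p_{k-1}$, hence $\|P'\|_{L^2(w)}^2=\sum_{k=1}^{n}2k\,c_k^2\le 2n\,\|P\|_{L^2(w)}^2$ with equality for $P=p_n$, which is Schmidt's value $\gamma_n^*=\sqrt{2n}$ \cite{schmidt}; for $\a>0$ the differentiation formula is a short-band recurrence, and the same bound $\sqrt{2n}$ is established in \cite{DK}. For the Laguerre weight $w(x)=x^{\a}e^{-x}$ the derivative $p_k'$ spreads over all of $p_0,\dots,p_{k-1}$, and a careful estimate of the entries $d_{j,k}$ gives $\|D_n\|\le C_\a n$ with $C_\a$ independent of $n$; this is the content of \cite{ADK} and recovers $(\ref{mark7})$. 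For the Jacobi weight $w(x)=(1-x)^{\a}(1+x)^{\b}$ I would combine the differentiation formula $\frac{d}{dx}P_k^{(\a,\b)}=\frac{k+\a+\b+1}{2}\,P_{k-1}^{(\a+1,\b+1)}$ with the connection coefficients between $\{P_j^{(\a+1,\b+1)}\}$ and $\{P_j^{(\a,\b)}\}$ to build $D_n$, and then invoke the bound $\|D_n\|\le C_{\a,\b}\,n^2$ proved in \cite{DMS} (and, for particular parameter ranges, already in \cite{schmidt}).

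The hard part is the Jacobi case: in contrast with the Hermite situation $D_n$ is neither diagonal nor banded, and extracting the optimal power $n^2$ — rather than a larger one — rests on the uniform estimates for the Jacobi connection coefficients carried out in \cite{DMS}. I would take these, together with \cite{ADK,schmidt,DK}, as given, so that the proof of Proposition~\ref{prop1} reduces to recording the reduction above and quoting the three sharp-constant theorems.
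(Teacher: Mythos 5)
Your proposal is correct and matches the paper, which gives no independent proof of Proposition \ref{prop1}: it simply records the known sharp-constant results, citing \cite{ADK} for Laguerre, \cite{schmidt,DK} for generalized Hermite, and \cite{schmidt,DMS} for Jacobi, exactly as you do. Your singular-value reduction is the same framework the paper already sketches in its introduction (the matrix $B_n$ with entries $\int p_j' p_k\, w\,dx$), so your write-up is essentially the paper's route with the bibliographic deferral made explicit.
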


In Theorem \ref{Main_result} below we extend these results to the context of weighted Sobolev spaces.
We want to remark that the proof provides explicit expressions for the involved constants.

\begin{teo}
\label{Main_result}
The following inequalities are satisfied.

\begin{enumerate}
\item Laguerre-Sobolev case:
$$
\|P'\|_{W^{k,2}(w,\l_1w,\dots,\l_k w)} \le C_\a n\, \|P\|_{W^{k,2}(w,\l_1w,\dots,\l_k w)},
$$
where $w(x):= x^\a e^{-x}$ in $[0,\infty)$, $\alpha>-1$, $\l_1,\dots,\l_k \ge 0$, $P \in \PP_n$ and $C_\a$ is the same constant as in Proposition \ref{prop1} $(1)$.

\item Generalized Hermite-Sobolev case:
$$
\|P'\|_{W^{k,2}(w,\l_1w,\dots,\l_k w)} \le \sqrt{2n}\, \|P\|_{W^{k,2}(w,\l_1w,\dots,\l_k w)},
$$
where  $w(x):= |x|^{\a} e^{-x^2}$ in $\RR$, $\a\ge 0$, $\l_1,\dots,\l_k \ge 0$ and $P \in \PP_n$.

\item Jacobi-Sobolev case:
$$
\|P'\|_{W^{k,2}(w,\l_1w,\dots,\l_k w)} \le C_{\a,\b}\, n^2\, \|P\|_{W^{k,2}(w,\l_1w,\dots,\l_k w)},
$$
where $w(x):= (1-x)^{\a} (1+x)^{\b}$ in $[-1,1]$, $\a,\b>-1$, $\l_1,\dots,\l_k \ge 0$, $P \in \PP_n$ and $C_{\a,\b}$ is the constant in Proposition \ref{prop1} $(3)$.

\item Let us consider the generalized Jacobi weight $w(x):= h(x) \Pi_{j=1}^{r} |x-c_j|^{\g_j}$ in $[a,b]$ with $c_1,\dots , c_r \in \RR$,
$\g_1, \dots , \g_r\in \RR$, $\g_j > -1$ when $c_j \in [a,b]$,
and $h$ a measurable function satisfying
$0<m\le h \le M$ in $[a,b]$ for some constants $m,M$.
Then we have
$$
\|P'\|_{W^{k,2}(w,\l_1w,\dots,\l_k w)} \le
 C_1(a,b,c_1, \dots , c_{r}, \g_1, \dots , \g_{r},m,M)\, n^2\, \|P\|_{W^{k,2}(w,\l_1w,\dots,\l_k w)},
$$
for every $\l_1,\dots,\l_k \ge 0$ and $P \in \PP_n$.

\item Consider now the generalized Laguerre weight $w(x):= h(x) \Pi_{j=1}^{r} |x-c_j|^{\g_j} e^{-x}$ in $[0,\infty)$ with
$c_1<\dots < c_r$,
$c_r \ge 0$, $\g_1, \dots , \g_r\in \RR$, $\g_j > -1$ when $c_j \ge 0$,
and $h$ a measurable function satisfying $0<m\le h \le M$ in $[0,\infty)$ for
some constants $m,M.$

$(5.1)$ If $\sum_{j=1}^{r-1} \g_j=0$, then
$$
\|P'\|_{W^{k,2}(w,\l_1w,\dots,\l_k w)} \le
 C_{2}(c_1, \dots , c_{r}, \g_1, \dots , \g_{r},m,M)\, n^2\, \|P\|_{W^{k,2}(w,\l_1w,\dots,\l_k w)},
$$
for every $\l_1,\dots,\l_k \ge 0$ and $P \in \PP_n$.

$(5.2)$ Assume that $c_1<\cdots < c_r$ and $\sum_{j=1}^{r} \g_j > -1$. Let $r_0:=\min \{1\le j \le r\,|\, c_j\ge 0 \}$,
$\g_{r_0-1}':=\g_{r+1}':=0$
and $\g_{j}':=\g_{j}$ for every $r_0 \le j \le r$.
Assume that $\max\{\g_j',\g_{j+1}'\} \ge -1/2$ for every $r_0-1 \le j \le r$.
Then we have
$$
\|P'\|_{W^{k,2}(w,\l_1w,\dots,\l_k w)} \le
 C_{2}'(c_1, \dots , c_{r}, \g_1, \dots , \g_{r},m,M)\, n^{a'}\, \|P\|_{W^{k,2}(w,\l_1w,\dots,\l_k w)},
$$
for every $\l_1,\dots,\l_k \ge 0$ and $P \in \PP_n$, where
$$
a':= \max \Big\{ 2\,,\,\frac{b'+2}{2}\, \Big\} ,
\quad
b':= \max_{r_0-1\le j \le r} \big(\g_j'+\g_{j+1}'+|\g_j'-\g_{j+1}'| + 2 \big) .$$

\item Let us consider the generalized Hermite weight $w(x):= h(x) \Pi_{j=1}^{r} |x-c_j|^{\g_j} e^{-x^2}$ in $\RR$ with $c_1<\cdots < c_r$, $\g_1, \dots , \g_r > -1$
with $\sum_{j=1}^r \g_j \ge 0$ and $h$ a measurable function satisfying $0<m\le h \le M$ in $\RR$ for some constants
$m,M.$
Define $\g_0:=\g_{r+1}:=0$ and assume that $\max\{\g_j,\g_{j+1}\} \ge -1/2$ for every $0 \le j \le r$.
Then we have
$$
\|P'\|_{W^{k,2}(w,\l_1w,\dots,\l_k w)} \le
 C_3(c_1, \dots , c_{r}, \g_1, \dots , \g_{r},m,M)\, n^a\, \|P\|_{W^{k,2}(w,\l_1w,\dots,\l_k w)},
$$
for every $\l_1,\dots,\l_k \ge 0$ and $P \in \PP_n$, where
$$
a:= \max \Big\{ 2\,,\,\frac{b+1}{2}\, \Big\} ,
\quad
b:= \max_{0\le j \le r} \big(\g_j+\g_{j+1}+|\g_j-\g_{j+1}| + 2 \big) .
$$
\end{enumerate}

\medskip
In each case the multiplicative constants depend just on the specified parameters (in particular, they do not depend on $n$).
\end{teo}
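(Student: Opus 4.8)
The plan is to reduce the Sobolev-norm Markov inequality to the already-known $L^2$-norm Markov inequalities of Proposition \ref{prop1} (and, for the generalized weights in items $(4)$--$(6)$, to their extensions for generalized classical weights due to \cite{DK}). The key observation is that the weighted Sobolev norm
$$
\|P\|_{W^{k,2}(w,\l_1w,\dots,\l_k w)}^2 = \|P\|_{L^2(w)}^2 + \sum_{j=1}^{k}\l_j\,\|P^{(j)}\|_{L^2(w)}^2
$$
is built from the {\it same} weight $w$ in every slot. Hence, if we have a single constant $c_n$ (namely $C_\a n$, $\sqrt{2n}$, $C_{\a,\b}n^2$, etc., depending on the case) such that $\|Q'\|_{L^2(w)}\le c_n\|Q\|_{L^2(w)}$ for all $Q\in\PP_n$, then applying this to $Q=P$, $Q=P'$, \dots, $Q=P^{(k)}$ — all of which lie in $\PP_n$ when $P\in\PP_n$ — and noting that $(P')^{(j)}=P^{(j+1)}$, we get
$$
\|P'\|_{W^{k,2}}^2 = \|P'\|_{L^2(w)}^2 + \sum_{j=1}^{k}\l_j\,\|P^{(j+1)}\|_{L^2(w)}^2
\le c_n^2\Big(\|P\|_{L^2(w)}^2 + \sum_{j=1}^{k}\l_j\,\|P^{(j)}\|_{L^2(w)}^2\Big) = c_n^2\,\|P\|_{W^{k,2}}^2 .
$$
Taking square roots gives exactly the claimed inequality with the {\it same} constant $c_n$ and the {\it same} power of $n$ as in the underlying $L^2$ case; this is why items $(1)$--$(3)$ inherit verbatim the constants $C_\a$, $C_{\a,\b}$ and the bound $\sqrt{2n}$ from Proposition \ref{prop1}.

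For items $(1)$, $(2)$, $(3)$ this is essentially the whole argument once Proposition \ref{prop1} is invoked. For items $(4)$, $(5)$, $(6)$ the extra content lies entirely in establishing the base $L^2$ Markov inequality for the generalized Jacobi, generalized Laguerre and generalized Hermite weights, after which the same telescoping bootstrap carries it to the Sobolev setting. For item $(4)$ one compares the generalized Jacobi weight with an ordinary Jacobi weight: since $h$ is bounded between $m$ and $M$, and each factor $|x-c_j|^{\g_j}$ with $c_j\notin[a,b]$ is bounded above and below by positive constants on $[a,b]$, the weight is comparable (up to multiplicative constants depending on the listed parameters) to a genuine Jacobi weight whose exponents are the $\g_j$ for those $c_j$ that land in $[a,b]$ (with $\g_j>-1$ by hypothesis), so one transfers the $n^2$ estimate of Proposition \ref{prop1}$(3)$ through the norm equivalence $\|\cdot\|_{L^2(w)}\asymp\|\cdot\|_{L^2(w_{\mathrm{Jac}})}$. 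Items $(5)$ and $(6)$ require the corresponding results of \cite{DK} for generalized Laguerre and generalized Hermite weights — which is precisely where the exponents $a'$, $b'$ (resp. $a$, $b$) and the hypothesis $\max\{\g_j',\g_{j+1}'\}\ge-1/2$ enter — combined again with the $h\asymp 1$ comparison and the handling of mass points $c_j$ outside the interval; in case $(5.1)$ the condition $\sum_{j=1}^{r-1}\g_j=0$ is what makes the product of the outside factors reduce to a bounded function, yielding the clean $n^2$ rate.

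The main obstacle is not the Sobolev step — that is the soft telescoping argument above — but rather assembling the sharp $L^2$ Markov inequalities for the generalized weights in $(4)$--$(6)$ with the stated exponents and the correct dependence of the constants on $(c_1,\dots,c_r,\g_1,\dots,\g_r,m,M)$. One must be careful that the comparison weight used is again a bona fide (generalized) classical weight to which \cite{ADK,DK,schmidt} applies, track how the constants in those references depend on the exponents so that replacing $w$ by the comparable weight only multiplies the Markov constant by $M/m$ times an explicit product of powers of $|c_j - a|$, $|c_j-b|$ (or the analogous quantities on $[0,\infty)$ and $\RR$), and verify that the hypotheses $\g_j>-1$ for $c_j$ inside the interval, $\sum\g_j\ge 0$ (resp. $>-1$), and $\max\{\g_j,\g_{j+1}\}\ge-1/2$ are exactly those needed for the base inequality to hold. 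Once the base inequality is in hand in each case, substituting $P,P',\dots,P^{(k)}$ and summing as displayed above finishes the proof, and the resulting constants manifestly do not depend on $n$.
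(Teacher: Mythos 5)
Your reduction of the Sobolev inequality to the underlying $L^2$ inequality is exactly the paper's argument: from $\|Q'\|_{L^2(w)}\le c_n\|Q\|_{L^2(w)}$ on $\PP_n$ applied to $Q=P,P',\dots,P^{(k)}$ one obtains \eqref{ec1} and \eqref{ec2}, and items $(1)$--$(3)$ follow with the same constants. The gap is in everything after that: items $(4)$--$(6)$ are precisely the new content (the paper's Remark stresses they are new even for $\l_1=\dots=\l_k=0$), and your proposal does not actually establish the base $L^2$ inequalities for them. For $(4)$, the asserted norm equivalence $\|\cdot\|_{L^2(w)}\asymp\|\cdot\|_{L^2(w_{\mathrm{Jac}})}$ is false as soon as some $c_j$ with $\g_j\neq 0$ lies in the interior of $[a,b]$: a Jacobi weight is singular only at the two endpoints, so the ratio $w/w_{\mathrm{Jac}}$ tends to $0$ or $\infty$ at such a $c_j$ and no two-sided comparison holds. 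The key idea missing from your sketch is the paper's subdivision: set $c_0:=a$, $c_{r+1}:=b$, observe that on each subinterval $[c_j,c_{j+1}]$ the weight is comparable (with constants $m_j,M_j$) to the two-endpoint Jacobi weight $|x-c_j|^{\g_j}|x-c_{j+1}|^{\g_{j+1}}$, apply Proposition \ref{prop1} $(3)$ after an affine change of variables on each piece (the restriction of $P\in\PP_n$ to a subinterval is still in $\PP_n$), and then paste the local inequalities by summing squares over $j$.

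For $(5)$ and $(6)$ you delegate the base inequalities to \cite{DK}, but \cite{DK}, \cite{ADK} and \cite{schmidt} only cover the single-singularity weights of Proposition \ref{prop1}; they do not contain Markov inequalities for weights with several singular points, nor the exponents $a,a',b,b'$ or the hypothesis $\max\{\g_j',\g_{j+1}'\}\ge -1/2$, so citing them does not close the argument. In the paper these exponents come from a genuine extra step: split $[0,\infty)$ (resp.\ $\RR$) into a compact part, handled by case $(4)$, and a tail on which $w$ is comparable to $x^{s}e^{-x}$ with $s=\sum_j\g_j$ (resp.\ $|x|^{\a}e^{-x^2}$ with $\a=\sum_j\g_j$); the Laguerre (resp.\ Hermite) Markov inequality on the tail then produces the parasitic term $\|P\|^2_{L^2([0,A],\,x^se^{-x})}$, which is \emph{not} comparable to $\|P\|^2_{L^2([0,A],\,w)}$ because of the interior singularities, and it is controlled via Lupa\c{s}' inequality; this is exactly where $\max\{\g_j,\g_{j+1}\}\ge-1/2$ and the exponents $b,b'$ (hence $a,a'$, possibly larger than $2$) arise. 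In $(5.1)$ the hypothesis $\sum_{j=1}^{r-1}\g_j=0$ is what makes $w$ comparable to $(x-c_r)^{\g_r}e^{-x}$ on the tail $[c_r,\infty)$, which is then combined with case $(4)$ on $[0,c_r]$. Without these constructions your proposal proves $(1)$--$(3)$ but only asserts $(4)$--$(6)$.
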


\begin{obs}
Note that $(4)$, $(5),$ and $(6)$ are new results in the classical (non-Sobolev) context
(taking $\l_1=\dots=\l_k = 0$).
In $(5.2)$, there is no hypothesis on $\sum_{j=1}^{r-1} \g_j$.
\end{obs}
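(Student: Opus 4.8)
The plan is to treat the two assertions of the remark separately; neither requires anything beyond Theorem~\ref{Main_result} together with the definition of the weighted Sobolev norm.

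For the first assertion, I would recall that
\[
\|P\|_{W^{k,2}(w,\l_1w,\dots,\l_k w)}^2 = \|P\|_{L^2(w)}^2 + \sum_{j=1}^k \l_j\,\|P^{(j)}\|_{L^2(w)}^2 ,
\]
so that setting $\l_1=\dots=\l_k=0$ annihilates every summand except the first and the norm collapses to $\|\,\cdot\,\|_{L^2(w)}$. Consequently $(4)$, $(5)$ and $(6)$ specialize to genuine $L^2(w)$ Markov-type inequalities for the respective generalized Jacobi, Laguerre and Hermite weights. To argue that these are \emph{new}, I would compare the class of admissible weights with those behind Proposition~\ref{prop1}: the estimates of \cite{ADK,ADK2,schmidt,DK,DMS} address the pure classical weights (or a single algebraic factor), whereas the weights in $(4)$–$(6)$ carry an arbitrary bounded measurable factor $h$ with $0<m\le h\le M$ together with a finite product $\prod_{j=1}^r |x-c_j|^{\g_j}$ of algebraic zeros and singularities placed at arbitrary interior and endpoint locations. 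Establishing that no previously published $L^2$ Markov inequality covers this family is exactly the sense in which the classical statements are new.

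For the second assertion, I would simply read off the hypotheses of $(5.2)$: the conditions imposed are $c_1<\cdots<c_r$, the global bound $\sum_{j=1}^r \g_j>-1$, and the pairwise requirement $\max\{\g_j',\g_{j+1}'\}\ge -1/2$ for $r_0-1\le j\le r$. None of these constrains the partial sum $\sum_{j=1}^{r-1}\g_j$, in contrast with $(5.1)$, where $\sum_{j=1}^{r-1}\g_j=0$ is imposed outright in order to secure the exponent $n^2$. To make the trade-off transparent I would use the identity $\g_j'+\g_{j+1}'+|\g_j'-\g_{j+1}'| = 2\max\{\g_j',\g_{j+1}'\}$, which yields $b' = 2+2\max\{0,\max_{r_0\le j\le r}\g_j\}$ and hence $a'=2$ exactly when $\g_j\le 0$ for every $j$ with $c_j\ge 0$; this pins down precisely the regime in which $(5.2)$ reproduces the sharp exponent of $(5.1)$ and $(4)$, while otherwise $n^{a'}$ is the price paid for dropping the hypothesis on $\sum_{j=1}^{r-1}\g_j$.

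Since all the analytic work resides in Theorem~\ref{Main_result}, which we may assume, there is no mathematical obstacle internal to the remark. The only delicate point is the novelty claim of the first assertion: it is not a proof but a comparison against the literature, and the step demanding care is verifying that each cited result genuinely fails to accommodate either the bounded factor $h$ or the several algebraic factors, so that its range of applicability is disjoint from that of $(4)$–$(6)$.
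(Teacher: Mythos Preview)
Your justification is correct, and in fact the paper offers no proof of this remark at all: it is stated as a self-evident observation immediately following Theorem~\ref{Main_result}, with no accompanying argument. Your reduction of the Sobolev norm to the $L^2(w)$ norm at $\lambda_1=\cdots=\lambda_k=0$ and your reading of the hypotheses of $(5.2)$ are exactly the intended content.

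Your additional computation that $b' = 2 + 2\max\{0,\max_{r_0\le j\le r}\gamma_j\}$ and hence $a'=2$ precisely when $\gamma_j\le 0$ for every $j$ with $c_j\ge 0$ is correct and is a genuine supplement to the paper, which does not isolate this regime. The one point to flag is your final paragraph: the novelty claim is, as you say, a literature comparison rather than a mathematical statement, and the paper simply asserts it without documentation; you should not expect to turn it into a proof in the usual sense.
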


\begin{proof}
First of all, note that if the inequality
$$
\|P'\|_{L^2(w)} \le C(n,w) \, \|P\|_{L^2(w)}
$$
holds for every polynomial $P \in \PP_n$ and some fixed weight $w$, then we have
\begin{equation}
\label{ec1}
\|P^{(j+1)}\|_{L^2(\l w)} \le C(n,w) \, \|P^{(j)}\|_{L^2(\l w)}
\end{equation}
for every polynomial $P \in \PP_n$ and every $\l \ge 0$.
Consequently, for the weighted Sobolev norm on $\PP$
$$\|P\|_{W^{k,2}(w,\l_1w,\dots,\l_k w)}:=\Big(\big\|P\big\|^2_ {L^2( w)}+ \sum_{j=1}^k
\big\|P^{(j)}\big\|^2_ {L^2(\l_j w)} \Big)^{1/2}, \quad
\l_1,\dots,\l_k \ge 0,$$
we have
\begin{equation}
\label{ec2}
\|P'\|_{W^{k,2}(w,\l_1w,\dots,\l_kw)} \le C(n,w) \, \|P\|_{W^{k,2}(w,\l_1w,\dots,\l_kw)},
\end{equation}
for every polynomial $P \in \PP_n$ and every $\l_1,\dots,\l_k \ge 0$.

Thus, $(1)$, $(2)$ and $(3)$ hold.

\medskip

In order to prove $(4)$, note that
using an affine transformation of the form $Tx=\a_1x+\a_2$, we obtain from Proposition \ref{prop1} $(3)$
$$
\|P'\|_{L^2(w)} \le C(a_1,a_2,\a,\b)\, n^2\, \|P\|_{L^2(w)},
$$
for the weight $w(x):= (a_2-x)^{\a} (x-a_1)^{\b}$ in $[a_1,a_2]$ and every polynomial $P \in \PP_n$.

\smallskip

Without loss of generality we can assume that $a\le c_1<\dots < c_{r} \le b$, since otherwise
we can consider
$$
w(x)= \tilde{h}(x) \prod_{\substack{ 1\le j\le r \\ c_j\in [a,b]}}  |x-c_j|^{\g_j} ,
\qquad
\tilde{h}(x):= h(x) \prod_{\substack{ 1\le j\le r \\ c_j\notin [a,b]}} |x-c_j|^{\g_j} .
$$

If we define $c_0:=a$, $c_{r+1}:=b$ and $\g_0:=\g_{r+1}:=0$, then we can write $w(x)= h(x) \Pi_{j=0}^{r+1} |x-c_j|^{\g_j}$.
Denote by $h_j$ the function
$$
h_j(x):= \frac{w(x)}{|x-c_j|^{\g_j}|x-c_{j+1}|^{\g_{j+1}}}\,,
$$
for $0\le j \le r$.
It is clear that there exist positive constants $m_j,M_j$
(depending just on $m,M,c_1, \dots , c_{r}, \g_1, \dots , \g_{r}$),
with $m_j\le h_j(x) \le M_j$ for every $x \in [c_j,c_{j+1}]$.

Hence, for $P \in \PP_n$, we have
$$
\begin{aligned}
\|P'\|_{L^2([c_j,c_{j+1}],w)}
& = \Big( \int_{c_j}^{c_{j+1}} |P'(x)|^2\, h_j(x) |x-c_j|^{\g_j}|x-c_{j+1}|^{\g_{j+1}} \,dx \Big)^{1/2}
\\
& \le \sqrt{M_j}\, \Big( \int_{c_j}^{c_{j+1}} |P'(x)|^2\,  |x-c_j|^{\g_j}|x-c_{j+1}|^{\g_{j+1}} \,dx \Big)^{1/2}
\\
& \le \sqrt{M_j}\; C(c_j, c_{j+1}, \g_j, \g_{j+1})\, n^2 \Big( \int_{c_j}^{c_{j+1}} |P(x)|^2 \, |x-c_j|^{\g_j}|x-c_{j+1}|^{\g_{j+1}} \,dx \Big)^{1/2}
\\
& \le \sqrt{M_j}\; C(c_j, c_{j+1}, \g_j, \g_{j+1})\, n^2 \Big( \int_{c_j}^{c_{j+1}} |P(x)|^2 \, |x-c_j|^{\g_j}|x-c_{j+1}|^{\g_{j+1}} \frac{h_j(x)}{m_j} \;dx \Big)^{1/2}
\\
& = \sqrt{\frac{M_j}{m_j}}\; C(c_j, c_{j+1}, \g_j, \g_{j+1})\, n^2\, \|P\|_{L^2([c_j,c_{j+1}],w)}
.
\end{aligned}
$$

\smallskip

Next, ``pasting" several times this last inequality in each subinterval $[c_{j},c_{j+1}]\subseteq [a,b]$, $0\leq j\leq r$,  we obtain
$$
\|P'\|_{L^2(w)} \le C_1(a,b,c_1, \dots , c_{r}, \g_1, \dots , \g_{r},m,M)\, n^2\, \|P\|_{L^2(w)}
$$
for every polynomial $P \in \PP_n$, with
$$
C_1(a,b,c_1, \dots , c_{r}, \g_1, \dots , \g_{r},m,M)
:= \max_{0\leq j\leq r} \sqrt{\frac{M_j}{m_j}}\; C(c_{j},c_{j+1},\g_{j},\g_{j+1}) .
$$
Hence, we obtain the case $(4)$ by applying \eqref{ec2}.

\medskip

Similarly, for the case $(5.1)$ we can write
$$
w(x)= H_1(x) \prod_{j=1}^{r} |x-c_j|^{\g_j} ,
$$
where $H_1(x):= h(x) e^{-x}$ satisfies $0< m\,e^{-c_r} \le H_1 \le M$ in $[0,c_r]$.
Then the case $(4)$ provides a constant $C_1$, which just depends on the appropriate parameters, with
\begin{equation} \label{equat1}
\|P'\|_{W^{k,2}([0,c_r],w,\l_1w,\dots,\l_k w)} \le
 C_1\, n^2\, \|P\|_{W^{k,2}([0,c_r],w,\l_1w,\dots,\l_k w)},
\end{equation}
for every $\l_1,\dots,\l_k \ge 0$ and every polynomial $P \in \PP_n$.

Proposition \ref{prop1} $(1)$ gives
$$
\|P'\|_{L^2(w_1)} \le C_\a \,n\, \|P\|_{L^2(w_1)},
$$
where $w_1(x):= x^\a e^{-x}$ in $[0,\infty)$, $\alpha>-1$ and  $P \in \PP_n$.
Hence, replacing $x$ by $x-c$,
we obtain with the same constant $C_\a$
$$
\|P'\|_{L^2([c,\infty),(x-c)^\a e^{c-x})} \le C_\a \,n\, \|P\|_{L^2([c,\infty),(x-c)^\a e^{c-x})},
$$
for every $c\ge 0$ and $P \in \PP_n$.
Now, if $w_2(x):=(x-c)^\a e^{-x}$, then the previous inequality implies
$$
\|P'\|_{L^2([c,\infty),w_2)} \le C_\a \, n\, \|P\|_{L^2([c,\infty),w_2)},
$$
for every $c\ge 0$ and $P \in \PP_n$, and \eqref{ec2} gives
$$
\|P'\|_{W^{k,2}([c,\infty),w_2,\l_1w_2,\dots,\l_k w_2)} \le
 C_\a\, n\, \|P\|_{W^{k,2}([c,\infty),w_2,\l_1w_2,\dots,\l_k w_2)},
$$
for every $c\ge 0$, $\l_1,\dots,\l_k \ge 0$ and $P \in \PP_n$.

We can write now
$$
w(x)= H_2(x) (x-c_r)^{\g_r} e^{-x},
$$
where $H_2(x):= h(x) \Pi_{j=1}^{r-1} |x-c_j|^{\g_j}$
and there exist constants $m_2,M_2$ with
$0< m_2 \le H_2 \le M_2$ in $[c_r,\infty)$, since $\sum_{j=1}^{r-1} \g_j=0$.
Thus,
\begin{equation} \label{equat2}
\|P'\|_{W^{k,2}([c_r,\infty),w,\l_1w,\dots,\l_k w)} \le
 C_{\g_r}\sqrt{\frac{M_2}{m_2}}\; n\, \|P\|_{W^{k,2}([c_r,\infty),w,\l_1w,\dots,\l_k w)},
\end{equation}
for every $\l_1,\dots,\l_k \ge 0$ and $P \in \PP_n$.

If we define $C_2:=\max\big\{C_1,\, C_{\g_r}\sqrt{M_2/m_2}\, \big\}$, then \eqref{equat1} and \eqref{equat2} give
$$
\|P'\|_{W^{k,2}(w,\l_1w,\dots,\l_k w)} \le
 C_2\, n^2\, \|P\|_{W^{k,2}(w,\l_1w,\dots,\l_k w)},
$$
for every $\l_1,\dots,\l_k \ge 0$ and $P \in \PP_n$.

\medskip

Let us prove now $(5.2)$.
Define $A:=1+c_r$.
We can write
$$
w(x)= H_3(x) \prod_{j=1}^{r} |x-c_j|^{\g_j} ,
$$
where $H_3(x):= h(x) e^{-x}$ satisfies $0< m\,e^{-A} \le H_3 \le M$ in $[0,A]$.
Then the case $(4)$ provides a constant $C_1$, which just depends on the appropriate parameters, with
\begin{equation} \label{equat221}
\|P'\|_{W^{k,2}([0,A],w,\l_1w,\dots,\l_k w)} \le
 C_1\, n^2\, \|P\|_{W^{k,2}([0,A],w,\l_1w,\dots,\l_k w)},
\end{equation}
for every $\l_1,\dots,\l_k \ge 0$ and every polynomial $P \in \PP_n$.

Proposition \ref{prop1} $(1)$ gives a constant $C_s$ with
$$
\|P'\|_{L^2(w_3)}^2 \le C_s^2 n^2\, \|P\|_{L^2(w_3)}^2,
$$
where $w_3(x):= x^s e^{-x}$, $s:= \sum_{j=1}^{r} \g_j > -1$ and $P \in \PP_n$.

We can write now
$$
w(x)= H_4(x) x^{s} e^{-x}= H_4(x) w_3(x),
$$
where $H_4(x):= h(x) x^{-s} \Pi_{j=1}^{r} |x-c_j|^{\g_j}$,
and there exist constants $m_4,M_4$ with
$0< m_4 \le H_4 \le M_4$ in $[A,\infty)$, since $s= \sum_{j=1}^{r} \g_j$.
Thus,
\begin{equation} \label{equat222}
\begin{aligned}
\|P'\|^2_{L^2([A,\infty),w)}
& \le M_4 \|P'\|_{L^2(w_3)}^2
\le C_s^2 n^2 M_4\, \|P\|_{L^2(w_3)}^2
\\
& \le C_s^2 n^2\frac{M_4}{m_4}\;\|P\|^2_{L^2([A,\infty),\,w)} + C_s^2 n^2 M_4\,\|P\|^2_{L^2([0,A],\,w_3)},
\end{aligned}
\end{equation}
for every $P \in \PP_n$.

\smallskip

Using Lupa\c{s}' inequality \cite{L} (see also \cite[p.594]{MMR}):
$$
\|P\|_{L^\infty([-1,1])}\leq \sqrt{\frac{\Gamma(n+\alpha+\beta+2)}{2^{\alpha+\beta+1}\Gamma(q+1)\Gamma(n+q'+1)}{n+q+1\choose n}}\sqrt{ \int_{-1}^1 |P(x)|^{2} (1-x)^{\alpha} (1+x)^{\beta}dx},
$$
for every $P \in \PP_n$,
where $q=\max(\alpha,\beta)\geq-1/2$ and $q'=\min(\alpha,\beta)$, we obtain that
$$2\|P\|^{2}_{L^\infty([-1,1])}\leq \frac{\Gamma(n+\alpha+\beta+2)\Gamma(n+q+2)}{2^{\alpha+\beta}\Gamma(q+1)\Gamma(q+2)\Gamma(n+1)\Gamma(n+q'+1)}
\int_{-1}^1 |P(x)|^{2} (1-x)^{\alpha} (1+x)^{\beta}dx.$$

Now, taking into account that
$$
\lim_{n\to\infty}\frac{\Gamma(n+x)}{\Gamma(n+y)n^{x-y}}=1, \quad x,y\in\RR,
$$
we get
$$
\frac{\Gamma(n+\alpha+\beta+2)\Gamma(n+q+2)}{\Gamma(n+1)\Gamma(n+q'+1)} \sim n^{\alpha+\beta+1} n^{q-q'+1}
= n^{\alpha+\beta+ |\alpha-\beta|+2}.
$$
Consequently, there exists a constant $k_1$, which just depends on $\a$ and $\b$, such that
$$\int_{-1}^1 |P(x)|^{2} dx
\leq 2\|P\|^{2}_{L^\infty([-1,1])}\leq k_{1}(\alpha,\beta) n^{v(\alpha,\beta)}\int_{-1}^1 |P(x)|^{2} (1-x)^{\alpha} (1+x)^{\beta}dx,$$
where $v(\alpha,\beta)= \alpha+\beta+ |\alpha-\beta|+2$ and $P \in \PP_n$.

Recall that $\max\{\g_j',\g_{j+1}'\} \ge -1/2$ for every $r_0-1 \le j \le r$ and
$$
b':= \max_{r_0-1\le j \le r} \big(\g_j'+\g_{j+1}'+|\g_j'-\g_{j+1}'| + 2 \big) .
$$

Therefore, a similar argument to the one in the proof of $(4)$ gives
$$
\int_{0}^{A} |P(x)|^2 \, dx
\le k_2\, n^{b'} \int_{0}^{A} |P(x)|^2 \prod_{j=r_0-1}^{r+1} |x-c_j|^{\g_j'}\, dx
= k_2\, n^{b'} \int_{0}^{A} |P(x)|^2 \prod_{j=r_0}^{r} |x-c_j|^{\g_j}\, dx ,
$$
for every polynomial $P \in \PP_n$ and some constant $k_2$ which just depends on $c_{r_0},\dots , c_r, \g_{r_0}, \dots , \g_r$.
Thus,
$$
\int_{0}^{A} |P(x)|^2 \, dx
\le k_3\, n^{b'} \int_{0}^{A} |P(x)|^2 \prod_{j=1}^{r} |x-c_j|^{\g_j}\, dx,
$$
for every polynomial $P \in \PP_n$ and some constant $k_3$ which just depends on $c_{1},\dots , c_r, \g_{1}, \dots , \g_r$.

Hence,
$$
\begin{aligned}
\|P\|^2_{L^2([0,A],w_3)} & = \int_{0}^{A} |P(x)|^2 x^s e^{-x}\, dx
\le A^s \int_{0}^{A} |P(x)|^2 \, dx
\\
& \le k_3\, n^{b'} A^s \int_{0}^{A} |P(x)|^2 \prod_{j=1}^{r} |x-c_j|^{\g_j}\, dx
\\
& \le \frac1m\,k_3\, n^{b'} A^s e^{A} \int_{0}^{A} |P(x)|^2 h(x) \prod_{j=1}^{r} |x-c_j|^{\g_j} e^{-x}\, dx
\\
& \le \frac1m\,k_3\, A^s e^{A} n^{b'} \|P\|^2_{L^2(w)},
\end{aligned}
$$
for every polynomial $P \in \PP_n$.

This inequality and \eqref{equat222} give
$$
\begin{aligned}
\|P'\|^2_{L^2([A,\infty),w)}
& \le \max \Big\{ C_s^2 \,\frac{M_4}{m_4}\,n^2, \,C_s^2\,\frac{M_4}{m}\,k_3\,A^s e^{A} n^{b'+2} \Big\} \,\|P\|^2_{L^2(w)}
\\
& \le k_4\, n^{b'+2} \,\|P\|^2_{L^2(w)},
\end{aligned}
$$
for every polynomial $P \in \PP_n$, where
$$
k_4:= \max \Big\{ C_s^2\,\frac{M_4}{m_4}\;, C_s^2\,\frac{M_4}{m}\,k_3\, A^s e^{A}\Big\} .
$$
Hence,
$$
\|P'\|^2_{W^{k,2}([A,\infty),\,w,\l_1w,\dots,\l_k w)} \le
k_4\, n^{b'+2} \, \|P\|^2_{W^{k,2}(w,\l_1w,\dots,\l_k w)},
$$
for every $\l_1,\dots,\l_k \ge 0$ and every polynomial $P \in \PP_n$, and \eqref{equat221} allows to deduce
$$
\|P'\|^2_{W^{k,2}(w,\l_1w,\dots,\l_k w)} \le
\big( C_1^2 n^4 + k_4\, n^{b'+2} \big) \|P\|^2_{W^{k,2}(w,\l_1w,\dots,\l_k w)},
$$
for every $\l_1,\dots,\l_k \ge 0$ and every polynomial $P \in \PP_n$.
If we define
$$
k_5:= \big( C_1^2 + k_4 \big)^{1/2} ,
$$
and we recall that
$$
a':= \max \Big\{ 2\,,\,\frac{b'+2}{2}\, \Big\} ,
$$
then
$$
\|P'\|_{W^{k,2}(w,\l_1w,\dots,\l_k w)} \le
k_5\, n^{a'} \,\|P\|_{W^{k,2}(w,\l_1w,\dots,\l_k w)},
$$
for every $\l_1,\dots,\l_k \ge 0$ and every polynomial $P \in \PP_n$.

\medskip

Finally, let us show $(6)$.
Define $B:=1+\max\{|c_1|,|c_r|\}$.
We can write
$$
w(x)= H_5(x) \prod_{j=1}^{r} |x-c_j|^{\g_j} ,
$$
where $H_5(x):= h(x) e^{-x^2}$ satisfies $0< m\,e^{-B^2} \le H_5 \le M$ in $[-B,B]$.
Then the case $(4)$ provides a constant $C_1$, which just depends on the appropriate parameters, with
\begin{equation} \label{equat221b}
\|P'\|_{W^{k,2}([-B,B],w,\l_1w,\dots,\l_k w)} \le
 C_1\, n^2\, \|P\|_{W^{k,2}([-B,B],w,\l_1w,\dots,\l_k w)},
\end{equation}
for every $\l_1,\dots,\l_k \ge 0$ and every polynomial $P \in \PP_n$.

Proposition \ref{prop1} $(2)$ gives
$$
\|P'\|_{L^2(w_4)}^2 \le 2 n\, \|P\|_{L^2(w_4)}^2,
$$
where $w_4(x):= |x|^\a e^{-x^2}$, $\alpha:= \sum_{j=1}^{r} \g_j \ge 0$ and  $P \in \PP_n$.

We can write now
$$
w(x)= H_6(x) |x|^{\a} e^{-x^2}= H_6(x) w_4(x),
$$
where $H_6(x):= h(x) |x|^{-\a} \Pi_{j=1}^{r} |x-c_j|^{\g_j}$,
and there exist constants $m_6,M_6$ with
$0< m_6 \le H_6 \le M_6$ in $(-\infty,-B]\cup [B,\infty)$, since $\alpha= \sum_{j=1}^{r} \g_j$.
Thus,
\begin{equation} \label{equat222b}
\begin{aligned}
\|P'\|^2_{L^2((-\infty,-B]\cup [B,\infty),w)}
& \le M_6 \|P'\|_{L^2(w_4)}^2
\le 2 n M_6\, \|P\|_{L^2(w_4)}^2
\\
& \le 2n\,\frac{M_6}{m_6}\;\|P\|^2_{L^2((-\infty,-B]\cup [B,\infty),\,w)} + 2n M_6\,\|P\|^2_{L^2([-B,B],\,w_4)},
\end{aligned}
\end{equation}
for every $P \in \PP_n$.

\smallskip

Since $\max\{\g_j,\g_{j+1}\} \ge -1/2$ for every $0 \le j \le r$
and $b:= \max_{0\le j \le r} \big(\g_j+\g_{j+1}+|\g_j-\g_{j+1}| + 2 \big)$,
the argument in the proof of $(5.2)$, using Lupa\c{s}' inequality, gives
$$
\int_{-B}^{B} |P(x)|^2 \, dx
\le k_6\, n^b \int_{-B}^{B} |P(x)|^2 \prod_{j=1}^{r} |x-c_j|^{\g_j}\, dx ,
$$
for every polynomial $P \in \PP_n$ and some constant $k_6$ which just depends on $c_1,\dots , c_r, \g_1, \dots , \g_r$.

Hence,
$$
\begin{aligned}
\|P\|^2_{L^2([-B,B],w_4)} & = \int_{-B}^{B} |P(x)|^2 |x|^\a e^{-x^2}\, dx
\le B^\a \int_{-B}^{B} |P(x)|^2 \, dx
\\
& \le k_6\, n^b B^\a \int_{-B}^{B} |P(x)|^2 \prod_{j=1}^{r} |x-c_j|^{\g_j}\, dx
\\
& \le \frac1m\,k_6\, n^b B^\a e^{B^2} \int_{-B}^{B} |P(x)|^2 h(x) \prod_{j=1}^{r} |x-c_j|^{\g_j} e^{-x^2}\, dx
\\
& \le \frac1m\,k_6\, n^b B^\a e^{B^2} \|P\|^2_{L^2(w)},
\end{aligned}
$$
for every polynomial $P \in \PP_n$.

This inequality and \eqref{equat222b} give
$$
\begin{aligned}
\|P'\|^2_{L^2((-\infty,-B]\cup [B,\infty),w)}
& \le \max \Big\{ 2\,\frac{M_6}{m_6}\,n,\, 2\,\frac{M_6}{m}\,k_6\, B^\a e^{B^2}n^{b+1} \Big\} \,\|P\|^2_{L^2(w)}
\\
& \le k_7\, n^{b+1} \,\|P\|^2_{L^2(w)},
\end{aligned}
$$
for every polynomial $P \in \PP_n$, where
$$
k_7:= \max \Big\{ 2\,\frac{M_6}{m_6}\;, 2\,\frac{M_6}{m}\,k_6\, B^\a e^{B^2}\Big\} .
$$
Hence,
$$
\|P'\|^2_{W^{k,2}((-\infty,-B]\cup [B,\infty),\,w,\l_1w,\dots,\l_k w)} \le
k_7\, n^{b+1} \, \|P\|^2_{W^{k,2}(w,\l_1w,\dots,\l_k w)},
$$
for every $\l_1,\dots,\l_k \ge 0$ and every polynomial $P \in \PP_n$, and \eqref{equat221b} allows to deduce
$$
\|P'\|^2_{W^{k,2}(w,\l_1w,\dots,\l_k w)} \le
\big( C_1^2 n^4 + k_7\, n^{b+1} \big) \|P\|^2_{W^{k,2}(w,\l_1w,\dots,\l_k w)},
$$
for every $\l_1,\dots,\l_k \ge 0$ and every polynomial $P \in \PP_n$.
If we define
$$
k_8:= \big( C_1^2 + k_7 \big)^{1/2} ,
$$
and we recall that
$$
a:= \max \Big\{ 2\,,\,\frac{b+1}{2}\, \Big\} ,
$$
then
$$
\|P'\|_{W^{k,2}(w,\l_1w,\dots,\l_k w)} \le
k_8\, n^a \,\|P\|_{W^{k,2}(w,\l_1w,\dots,\l_k w)},
$$
for every $\l_1,\dots,\l_k \ge 0$ and every polynomial $P \in \PP_n$.
\end{proof}

\section{Sobolev spaces with respect to measures}

\label{[Section-3]-Basic}

In this section we recall the definition of Sobolev spaces with respect to measures introduced in \cite{RARP1},
\cite{RARP2} and \cite{RARP3}.

\begin{definition}
\label{2} Given $1\le p < \infty$ and a set $A$ which is a union of intervals in $\RR$, we say that a weight $w$
defined in $A$ belongs to $B_p(A)$ if $w^{-1}\in L_{loc}^{1/(p-1)} (A)$ (if $p=1$, then $1/(p-1)=\infty$).
\end{definition}

It is possible to construct a similar theory with $p=\infty$. We refer to \cite{APRR}, \cite{PQRT1}, \cite{PQRT2} and
\cite{PQRT3} for the case $p=\infty$.

\smallskip

If $A=\RR$, then $B_p(\RR)$ contains, as a very particular case, the classical $A_p(\RR)$ weights appearing in Harmonic
Analysis. The classes $B_p(\O)$, with $\O\subseteq\RR^n$, have been used in other definitions of weighted Sobolev
spaces in $\RR^n$ in \cite{KO}.

\smallskip

We consider vector measures $\mu =(\mu_0, \dots , \mu_k)$ in the definition of our Sobolev space in $\RR$.
We assume that each $\mu_j$ is $\s$-finite; hence, by Radon-Nikodym's Theorem, we
have the decomposition $d\mu_j= d(\mu_j)_s + w_j ds$, where $(\mu_j)_s$ is
singular with respect to Lebesgue measure and $w_j$ is a non-negative Lebesgue measurable function.

In \cite{KO}, Kufner and Opic define the following sets:

\begin{definition}
\label{4} Let us consider $1\le p < \infty$ and a vector measure $\mu = (\mu_0, \dots , \mu_k)$. For $0\le j \le k$
we define the open set
$$
\O_j:=\big\{ x \in \RR\; : \; \exists \hbox{  an open neighbourhood  } V \hbox{  of  } x  \hbox{ with  } w_j\in
B_p(V)\big\}\,.
$$
\end{definition}

Note that we always have $w_j \in B_p(\O_j)$ for any $0\le j \le k$. In fact, $\O_j$ is the largest open set $U$ with
$w_j\in B_p(U)$. It is easy to check that  if $f^{(j)} \in L^p(\O_j,w_j),$ $1\le j \le k $, then $f^{(j)} \in
L^1_{loc}(\O_j)$ and, therefore, $f^{(j-1)} \in AC_{loc}(\O_j)$, i.e., $f^{(j-1)}$ is a locally absolutely continuous
function in $\O_j$.

Since the precise definition of Sobolev space requires some technical concepts (see Definition \ref{9}), we would like
to introduce here a heuristic definition of Sobolev space and an example which will help us to understand the technical
process that we will follow in order to reach Definition \ref{9}.

\begin{definition}
\label{heuristic} (Heuristic definition.) Let us consider $1\le p< \infty$ and a $p$-admissible vector measure
$\mu=(\mu_0,\dots,\mu_k)$ in $\RR$. We define the \emph{Sobolev space} $W^{k,p}(\mu)=W^{k,p}(\D,\mu)$, with $\D:=\cup_{j=0}^k \supp
(\mu_j)$, as the space of equivalence classes of
$$
\begin{aligned}
V^{k,p}(\mu):=V^{k,p}(\D,\mu):=\Big\{f: & \D\rightarrow\RR \; \;  :  \;\; \big\|f\big\|_{W^{k,p}(\D,\mu)}:=\Big(\sum_{j=0}^k
\big\|f^{(j)}\big\|^p_ {L^p(\D,\mu_j)} \Big)^{1/p} < \infty \,,
\\
& f^{(j)}\in AC_{loc} (\O_{j+1}\cup\cdots\cup\O_k) \hbox{ and } \, f^{(j)} \hbox{ satisfies }
\\
& \hbox{``pasting conditions" for } 0\le j < k \Big\}\,,
\end{aligned}
$$
with respect to the seminorm $\|\cdot \|_{W^{k,p}(\D,\mu)}$.
\end{definition}

\smallskip

These pasting conditions are natural: a function must be as regular as possible. In a first step, we check if the
functions and their derivatives are absolutely continuous up to the boundary (this fact holds in the following example),
and then we join the contiguous intervals:

\smallskip

\noindent {\bf Example.} $\mu_0:=\d_0$, $\mu_1:=0$, $d\mu_2:=\chi_{{}_{\scriptstyle [-1,0]}}(x) dx$ and
$d\mu_3:=\chi_{{}_{\scriptstyle [0,1]}}(x) dx$, where $\chi_{_{A}}$ denotes the characteristic function of the set $A$.

Since $\O_1=\emptyset$, $\O_2=(-1,0)$ and $\O_3=(0,1)$, $W^{3,p}(\mu)$ is the space of equivalence classes of
$$
\begin{aligned}
V^{3,p}(\mu) = \Big\{ f \ :  \ \|f\|_{W^{3,p}(\mu)} & < \infty \,, \, \text{ $f,f'$ satisfy ``pasting conditions",}
\\
& \ \, f,f',\in AC((-1,0)) \, \text{ and } \, f,f',f''\in AC((0,1)) \Big\}
\\
= \Big\{ f \ :  \ \|f\|_{W^{3,p}(\mu)} & < \infty \,, \, \text{ $f,f'$ satisfy ``pasting conditions",}
\\
& \ \, f,f',\in AC([-1,0]) \, \text{ and } \, f,f',f''\in AC([0,1]) \Big\}
\\
= \Big\{ f \ :  \ \|f\|_{W^{3,p}(\mu)} & < \infty \,, \ \, f,f'\in AC([-1,1]) \, \text{ and } \, f''\in AC([0,1])
\Big\}\,.
\end{aligned}
$$
In the current case, since $f$ and $f'$ are absolutely continuous in $[-1,0]$ and in $[0,1]$, we require that both are
absolutely continuous in $[-1,1]$.

\smallskip

These heuristic concepts can be formalized as follows:

\begin{definition}
Let us consider $1\le p<\infty$ and $\mu,\nu$ measures in $[a,b]$. We define
$$
\begin{aligned}
\L_{p,[a,b]}^+ (\mu, \nu) & := \sup_{a<x<b} \mu((a,x])
 \big\|(d\nu/ds)^{-1}\big\|_{L^{1/(p-1)}([x,b])}\,,
\\
\L_{p,[a,b]}^- (\mu, \nu) & := \sup_{a<x<b} \mu([x,b))
 \big\|(d\nu/ds)^{-1}\big\|_{L^{1/(p-1)}([a,x])}\,,
\end{aligned}
$$
where we use the convention $0\cdot \infty=0$.
\end{definition}

\smallskip

\noindent {\bf Muckenhoupt inequality.} (See \cite{Mu}, \cite[p.44]{M}, \cite[Theorem 3.1]{APRR}) {\it Let us consider $1\le p<\infty$ and
$\mu_0,\mu_1$ measures in $[a,b]$. Then:

$(1)$ There exists a real number $c$ such that
$$
\Big\|\int_{x}^{b} g(t)\,dt \Big\|_{L^p((a,b],\mu_0)}\le
 c\,
\big\|g\big\|_{L^p((a,b],\mu_1)}
$$
for any measurable function $g$ in $[a,b]$, if and only if $\L_{p,[a,b]}^+ (\mu_0, \mu_1)<\infty$.

$(2)$ There exists a positive constant $c$ such that
$$
\Big\|\int_{a}^x g(t)\,dt \Big\|_{L^p([a,b),\mu_0)}\le
 c\,
\big\|g\big\|_{L^p([a,b),\mu_1)}
$$
for any measurable function $g$ in $[a,b]$, if and only if $\L_{p,[a,b]}^- (\mu_0, \mu_1)<\infty$. }

\begin{definition}
Let us consider $1\le p<\infty$. A vector measure $\overline \mu=(\overline \mu_0,\dots,\overline \mu_k)$ is a right
completion of a vector measure $\mu=(\mu_0,\dots,\mu_k)$ in $\RR$ with respect to $a$ in a right neighborhood
$[a,b]$, if $\overline \mu_k=\mu_k$ in $[a,b]$, $\overline \mu_j=\mu_j$ in the complement of $(a,b]$ and
$$
\overline{\mu}_j= \mu_j+\tilde{\mu}_{j}\,,\qquad  \text{in } (a,b] \;\, \text{ for } 0\le j<k\,,
$$
where $\tilde \mu_{j}$ is any measure satisfying $\tilde \mu_j((a,b])<\infty$ and $\Lambda_{p, [a,b]}^+ (\tilde
\mu_j,\overline \mu_{j+1})<\infty$.
\end{definition}

Muckenhoupt inequality guarantees that if $f^{(j)}\in L^p(\mu_j)$ and $f^{(j+1)}\in L^p(\overline\mu_{j+1})$, then
$f^{(j)}\in L^p(\overline\mu_j)$ (see some examples of completions in \cite{RARP1} and \cite{APRR}).

\smallskip

\begin{obs}
We can define a left completion of $\mu$ with respect to $a$ in a similar way.
\end{obs}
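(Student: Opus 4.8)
The plan is to mirror the construction of the right completion, exchanging the roles of the two endpoints of the relevant neighborhood and replacing the functional $\L^+$ by $\L^-$. Concretely, I would say that a vector measure $\overline\mu=(\overline\mu_0,\dots,\overline\mu_k)$ is a \emph{left completion} of $\mu=(\mu_0,\dots,\mu_k)$ in $\RR$ with respect to $a$ in a left neighborhood $[c,a]$ if $\overline\mu_k=\mu_k$ in $[c,a]$, $\overline\mu_j=\mu_j$ in the complement of $[c,a)$, and
\[
\overline\mu_j=\mu_j+\tilde\mu_j, \qquad \text{in } [c,a) \ \text{ for } 0\le j<k,
\]
where now $\tilde\mu_j$ is any measure satisfying $\tilde\mu_j([c,a))<\infty$ and $\L_{p,[c,a]}^-(\tilde\mu_j,\overline\mu_{j+1})<\infty$. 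The only modifications with respect to the right completion are that the half-open interval $(a,b]$ is replaced by $[c,a)$, so that the added mass accumulates as one approaches $a$ from the left, and that $\L^+$ is replaced by $\L^-$.

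Next I would check that the analogue of the property guaranteed by the Muckenhoupt inequality holds, namely that $f^{(j)}\in L^p(\mu_j)$ and $f^{(j+1)}\in L^p(\overline\mu_{j+1})$ imply $f^{(j)}\in L^p(\overline\mu_j)$. Since $\overline\mu_j=\mu_j$ off $[c,a)$, it suffices to bound $\|f^{(j)}\|_{L^p([c,a),\tilde\mu_j)}$, and now the natural anchoring endpoint is the \emph{left} one: from $f^{(j)}(x)=f^{(j)}(c)+\int_c^x f^{(j+1)}(t)\,dt$ and the triangle inequality we get
\[
\|f^{(j)}\|_{L^p([c,a),\tilde\mu_j)} \le |f^{(j)}(c)|\,\tilde\mu_j([c,a))^{1/p} + \Big\| \int_c^x f^{(j+1)}(t)\,dt \Big\|_{L^p([c,a),\tilde\mu_j)}.
\]
The first term is finite because $\tilde\mu_j([c,a))<\infty$, and the second is finite by part $(2)$ of the Muckenhoupt inequality applied on $[c,a]$ with $\mu_0=\tilde\mu_j$, $\mu_1=\overline\mu_{j+1}$ and $g=f^{(j+1)}$, precisely because $\L_{p,[c,a]}^-(\tilde\mu_j,\overline\mu_{j+1})<\infty$.

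The only thing to get right --- rather than a genuine obstacle --- is the bookkeeping of orientations. In the right completion one recovers $f^{(j)}$ near the left endpoint $a$ of $[a,b]$ by integrating backwards from $b$, which is why part $(1)$ (the $\int_x^b$ estimate controlled by $\L^+$) is the relevant tool; in the left completion one recovers $f^{(j)}$ near the right endpoint $a$ of $[c,a]$ by integrating forward from $c$, which makes part $(2)$ (the $\int_c^x$ estimate controlled by $\L^-$) the right tool. Once the interval $[c,a]$ and the functional $\L^-$ are substituted consistently, the remaining estimates are word-for-word those of the right completion, so no new analytic input is needed.
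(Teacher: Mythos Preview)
Your proposal is correct and spells out exactly what the paper means by ``in a similar way''; the paper itself gives no proof or even an explicit definition here, treating the remark as self-evident by symmetry. Your detailed verification via part $(2)$ of the Muckenhoupt inequality is the natural one and goes beyond what the paper provides.
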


\begin{definition}
\label{d:regular} For $1\le p< \infty$ and a vector measure $\mu$ in $\RR$, we say that a point $a$ is right
$j$-regular $($respectively, left $j$-regular$)$, if there exist a right completion $\overline\mu$ $($respectively,
left completion$)$ of $\mu$ in $[a,b]$ and $j<i\le k$ such that $\overline{w}_{i}\in B_p ([a,b])$ $($respectively,
$B_p([b,a]))$. Also, we say that a point $a\in \g$ is $j$-regular, if it is right and left $j$-regular.
\end{definition}

\begin{obs}

{\bf 1.} A point $a$ is right $j$-regular $($respectively, left $j$-regular$)$, if at least one of the following
properties holds:

\smallskip

{\rm (a)} \ There exist a right (respectively, left) neighborhood $[a,b]$ (respectively, $[b,a]$) and $j<i\le k$ such
that $w_{i} \in B_p ([a,b])$ $($respectively, $B_p([b,a]))$. Here we have chosen $\tilde w_j=0$.

{\rm (b)} \ There exist a right (respectively, left) neighborhood $[a,b]$ (respectively, $[b,a]$) and $j<i\le k$,
$\a>0$, $\d< (i-j)p-1$, such that $w_{i}(x) \ge \a \, |x-a|^\delta$, for almost every $x\in [a,b]$ $($respectively,
$[b,a])$. See Lemma 3.4 in \cite{RARP1}.

{\bf 2.} If $a$ is right $j$-regular (respectively, left), then it is also right $i$-regular (respectively, left) for
each $0\le i \le j$.
\end{obs}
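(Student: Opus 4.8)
The plan is to read both assertions directly off the definitions of right completion and of right $j$-regularity, isolating the single computational point, which lies in part~1(b). Throughout I would treat the right-hand statements; the left-hand ones are identical after replacing the right neighborhood $[a,b]$ by a left neighborhood $[b,a]$ and $\L^+_{p,[a,b]}$ by $\L^-_{p,[a,b]}$. I would dispatch part~2 first, since it is purely formal: if $a$ is right $j$-regular, there are a right completion $\overline\mu$ of $\mu$ in some $[a,b]$ and an index $\ell$ with $j<\ell\le k$ and $\overline{w}_\ell\in B_p([a,b])$; for any $0\le i\le j$ one has $i\le j<\ell\le k$, so the \emph{same} completion $\overline\mu$ and the \emph{same} index $\ell$ witness $i<\ell\le k$ with $\overline{w}_\ell\in B_p([a,b])$, and hence $a$ is right $i$-regular. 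In short, decreasing $j$ only enlarges the set $\{j+1,\dots,k\}$ of admissible witnessing indices, so regularity is inherited downwards.

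For part~1(a) I would exhibit the trivial completion $\tilde\mu_{j'}=0$ for all $0\le j'<k$, so that $\overline\mu=\mu$. This is an admissible right completion: each $\tilde\mu_{j'}((a,b])=0<\infty$, and $\L^+_{p,[a,b]}(0,\overline\mu_{j'+1})=0$ by the convention $0\cdot\infty=0$. Since by hypothesis $\overline{w}_\ell=w_\ell\in B_p([a,b])$ for the given index $j<\ell\le k$, the point $a$ is right $j$-regular.

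The substance is part~1(b), which I would reduce to a completion built from pure power weights, exactly as in Lemma~3.4 of \cite{RARP1}. The idea is to propagate the admissible lower bound $w_\ell(x)\ge \a\,|x-a|^{\d}$ downwards through the Muckenhoupt inequality, one index at a time: set $\tilde\mu_\ell=0$ and, for $s=1,2,\dots$, add to level $\ell-s$ a measure of density $|x-a|^{\d-sp}$, leaving all levels below $\ell-m$ and above $\ell$ untouched. The condition to be verified at each step is $\L^+_{p,[a,b]}(\tilde\mu_{\ell-s-1},\overline\mu_{\ell-s})<\infty$, and with $\overline{w}_{\ell-s}\ge c\,|x-a|^{\d-sp}$ the two factors satisfy
$$
\tilde\mu_{\ell-s-1}((a,x])\asymp (x-a)^{\d-(s+1)p+1}, \qquad \big\|(\overline{w}_{\ell-s})^{-1}\big\|_{L^{1/(p-1)}([x,b])}\asymp (x-a)^{(p-1)-(\d-sp)},
$$
whose product is $\asymp(x-a)^{0}$ and so stays bounded as $x\to a^+$; thus every added measure is admissible. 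Letting $m$ be the first step at which $\d-mp<p-1$, one has $|x-a|^{\d-mp}\in B_p([a,b])$ and therefore $\overline{w}_{\ell-m}\in B_p([a,b])$. Since $\d<(\ell-j)p-1$ forces $(\d+1)/p<\ell-j$, this minimal $m$ satisfies $m\le \ell-j-1$, so the witnessing index obeys $j<\ell-m\le k$, and $a$ is right $j$-regular.

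The main obstacle is the exponent bookkeeping in this last step: one must choose the descending sequence of densities so that every intermediate exponent stays strictly above $-1$ (making each added measure finite) while the terminal exponent lands strictly below $p-1$ (putting the weight in $B_p$). The strict inequality $\d<(\ell-j)p-1$ supplies exactly the slack needed to avoid the borderline exponent $-1$, since the Muckenhoupt computation permits a decrease of \emph{at most} $p$ per step, allowing one to shave a little off a single step if an exponent would otherwise equal $-1$. The case $p=1$ is handled identically, with the $L^{1/(p-1)}$-norm replaced by the essential supremum.
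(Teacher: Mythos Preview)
Your proposal is correct and aligns precisely with what the paper indicates. Note that the paper does not actually prove this remark: it merely supplies the hint ``Here we have chosen $\tilde w_j=0$'' for part~1(a) and cites Lemma~3.4 of \cite{RARP1} for part~1(b), leaving part~2 entirely to the reader. Your arguments for 1(a) and 2 are exactly the intended ones, and your treatment of 1(b) is a faithful reconstruction of the power--weight completion used in the cited lemma, including the correct handling of the borderline exponent $-1$ via the strict inequality $\d<(i-j)p-1$.
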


\smallskip

When we use this definition we think of a point $\{t\}$ as the union of two half-points $\{t^+\}$ and $\{t^-\}$. With
this convention, each one of the following sets
$$
\begin{aligned}
(a,b) \cup (b,c) \cup \{b^+\} & = (a,b) \cup [b^+,c) \ne (a,c) \,,
\\
(a,b) \cup (b,c) \cup \{b^-\} & = (a,b^-] \cup (b,c) \ne (a,c) \,,
\end{aligned}
$$
has two connected components, and the set
$$
(a,b) \cup (b,c) \cup \{b^-\} \cup \{b^+\} = (a,b) \cup (b,c) \cup \{b\} = (a,c)
$$
is connected.

We use this convention in order to study the sets of continuity of functions: we want that if $f\in C(A)$ and $f\in
C(B)$, where $A$ and $B$ are union of intervals, then $f\in C(A\cup B)$. With the usual definition of continuity, if
$f\in C([a,b))\cap C([b,c])$ then we do not have $f\in C([a,c])$. Of course, we have $f\in C([a,c])$ if and only if
$f\in C([a,b^-])\cap C([b^+,c])$, where by definition, $C([b^+,c])=C([b,c])$ and $C([a,b^-])=C([a,b])$. This idea can
be formalized with a suitable topological space.

\smallskip

Let us introduce some more notation. We denote by $\O^{(j)}$ the set of $j$-regular points or half-points, i.e.,
$x\in\O^{(j)}$  if and only if $x$ is $j$-regular, we say that $x^+ \in \Omega^{(j)}$  if and only if $x$ is right
$j$-regular, and we say that $x^- \in \Omega^{(j)}$  if and only if $x$  is left $j$-regular. Obviously,
$\O^{(k)}=\emptyset$ and  $\Omega_{j+1}\cup \cdots \cup \Omega_k\subseteq \O^{(j)}$. Note that $\O^{(j)}$ depends on
$p$.

\smallskip

Intuitively, $\O^{(j)}$ is the set of ``good" points at the level $j$ for the vector weight $(w_0,\dots , w_k)$:
every function $f$ in the Sobolev space must verify that $f^{(j)}$ is continuous in $\O^{(j)}$.

\smallskip

Let us present now the class of measures that we use in the definition of Sobolev space.

\begin{definition}
\label{8} We say that the vector measure $\mu=(\mu_0,\dots,\mu_k)$ in $\RR$ is \emph{$p$-admissible} if $\mu_j$ is $\s$-finite and
$\mu_j^*(\RR\setminus \Omega^{(j)})=0$, for $1\le j < k$, and $\mu_k^* \equiv 0$, where $d\mu_j^* := d\mu_j - w_j
\chi_{_{\Omega_{j}}} \! dx$ and $\chi_{_{A}}$ denotes the characteristic function of the set $A$ (then $d\mu_k= w_k
\chi_{_{\Omega_{k}}} \! dx$).
\end{definition}

\begin{obs}
{\bf 1.} The hypothesis of $p$-admissibility is natural. It would not be reasonable to consider Dirac's deltas in
$\mu_j$ in the points where $f^{(j)}$ is not continuous.

{\bf 2.} Note that there is not any restriction on $\mu_0$.

{\bf 3.} Every absolutely continuous measure $w=(w_0, \dots, w_k)$ with $w_j=0$ a.e. in $\RR\setminus \Omega_j$ for
every $1\le j \le k$, is $p$-admissible (since then $\mu_j^*=0$). It is possible to find a weight $w$ which does not
satisfy this condition, but it is a hard task.

{\bf 4.} $(\mu_j)_s \le \mu_j^*$, and the equality usually holds.
\end{obs}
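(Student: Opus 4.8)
The entire remark is organised around one elementary identity. Combining the Radon--Nikodym decomposition $d\mu_j = d(\mu_j)_s + w_j\,ds$ with the definition $d\mu_j^* := d\mu_j - w_j\chi_{\O_j}\,ds$ from Definition \ref{8}, I would first record
$$
d\mu_j^* = d(\mu_j)_s + w_j\,ds - w_j\chi_{\O_j}\,ds = d(\mu_j)_s + w_j\,\chi_{\RR\setminus\O_j}\,ds ,
$$
an equality of nonnegative measures. Statement \textbf{4} then reads off at once: the right-hand side is a sum of two nonnegative measures one of which is $(\mu_j)_s$, so $(\mu_j)_s \le \mu_j^*$, with equality precisely when the absolutely continuous term $w_j\chi_{\RR\setminus\O_j}\,ds$ vanishes, i.e. when $w_j = 0$ a.e. on $\RR\setminus\O_j$. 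Since this last condition is the typical situation, the equality ``usually holds''.

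For statement \textbf{3} I would specialise the identity to an absolutely continuous vector measure, for which $(\mu_j)_s\equiv 0$ and hence $d\mu_j^* = w_j\chi_{\RR\setminus\O_j}\,ds$. The hypothesis $w_j = 0$ a.e. on $\RR\setminus\O_j$ for $1\le j\le k$ then yields $\mu_j^*\equiv 0$ for all such $j$; in particular $\mu_j^*(\RR\setminus\O^{(j)})=0$ for $1\le j<k$ and $\mu_k^*\equiv 0$, which are exactly the conditions of $p$-admissibility (the $\s$-finiteness being a blanket assumption on every $\mu_j$). The accompanying claim that one can still find a weight failing $w_j = 0$ a.e. on $\RR\setminus\O_j$ is the only point requiring real work: one must produce a density that is strictly positive a.e. yet whose reciprocal power $w_j^{-1/(p-1)}$ is non-integrable on every neighbourhood of a set of positive measure, so that this set lies outside $\O_j$ while still carrying absolutely continuous mass. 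I would indicate such a construction only in outline, as it is technical and used nowhere else.

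Statements \textbf{2} and \textbf{1} I would treat by inspection and by the guiding heuristic, respectively. For \textbf{2}, the conditions of Definition \ref{8} constrain $\mu_j^*$ only for indices $j\ge 1$ (namely $\mu_j^*(\RR\setminus\O^{(j)})=0$ for $1\le j<k$ and $\mu_k^*\equiv 0$); the index $j=0$ never occurs, so apart from the blanket $\s$-finiteness the measure $\mu_0$ is completely free and its singular part may be arbitrary. For \textbf{1}, I would recall that any $f$ in the Sobolev space has $f^{(j)}$ continuous on the set $\O^{(j)}$ of $j$-regular points, whereas off $\O^{(j)}$ the derivative $f^{(j)}$ need not even be well defined pointwise; charging a singular mass of $\mu_j$ (for instance a Dirac delta) at such a point would make the term $\|f^{(j)}\|_{L^p(\mu_j)}$ depend on a value of $f^{(j)}$ not determined by the equivalence class, so that the seminorm would be ill defined. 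The condition $\mu_j^*(\RR\setminus\O^{(j)})=0$ is exactly what forbids this, which is the meaning of ``naturality''. This also explains \textbf{2}: for $j=0$ the integrand is the genuine function $f=f^{(0)}$ itself, whose value is intrinsic to each representative, so no regularity is needed and $\mu_0$ may be charged anywhere.

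The main obstacle is thus not in the four assertions themselves --- each is a one-line consequence of the displayed decomposition or a direct reading of Definition \ref{8} --- but in the parenthetical existence claim of \textbf{3}, whose only honest justification is a delicate construction of a positive-a.e. weight with a non-locally-integrable negative power on a positive-measure set; I would present that step schematically.
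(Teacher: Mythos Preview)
The paper provides no proof of this remark; it is stated as a list of self-evident observations immediately after Definition~\ref{8} and before Definition~\ref{9}. Your proposal correctly supplies the routine verifications via the decomposition $d\mu_j^* = d(\mu_j)_s + w_j\chi_{\RR\setminus\O_j}\,ds$, which is the only natural approach and is clearly what the authors had in mind (their parenthetical ``since then $\mu_j^*=0$'' in item~\textbf{3} is exactly your argument in compressed form).
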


\begin{definition}
\label{9} Let us consider $1\le p< \infty$ and a $p$-admissible vector measure $\mu=(\mu_0,\dots,\mu_k)$ in $\RR$.
We define the \emph{Sobolev space} $W^{k,p}(\mu)=W^{k,p}(\D,\mu)$, with $\D:=\cup_{j=0}^k \supp
(\mu_j)$, as the space of equivalence classes of
\begin{eqnarray}
V^{k,p}(\D,\mu):=\Big\{f:\D\rightarrow\RR \ &:& \ f^{(j)}\in AC_{loc} (\O^{(j)}) \hbox{ for } 0\le j < k\hbox{ and }
\nonumber \\ &\,& \ \big\|f\big\|_{W^{k,p}(\D,\mu)}:=\Big(\sum_{j=0}^k \big\|f^{(j)}\big\|^p_ {L^p(\D,\mu_j)}
\Big)^{1/p}
 <\infty \Big\} \,, \nonumber
\end{eqnarray}
with respect to the seminorm $\|\cdot \|_{W^{k,p}(\D,\mu)}$.
\end{definition}

\section{Basic results on Sobolev spaces with respect to measures}

\label{[Section-4]-Dual}

This definition of Sobolev space is very technical, but it has interesting properties: we know explicitly how
are the functions in $W^{k,p}(\D,\mu)$ (this is not the case if we define the Sobolev space as the closure of some space of smooth functions);
if $\D$ is a compact set and $\mu$ is a finite measure, then in many cases,
$W^{k,p}(\D,\mu)$ is equal to the closure of the space of polynomials (see \cite[Theorem 6.1]{APRR}). Furthermore, we
have powerful tools in $W^{k,p}(\D,\mu)$ (see \cite{RARP1}, \cite{RARP2}, \cite{APRR} and \cite{RS}).

\smallskip

In \cite[Theorem 4.2]{RS} appears the following main result in the theory (in fact, this result in \cite{RS} holds for measures defined in any curve in the complex plane instead of $\RR$).

\begin{teo} \label{t:c1}
Let us  consider $1\le p < \infty$ and a $p$-admissible vector measure $\mu=(\mu_0,\dots,\mu_k)$. Then the Sobolev
space $W^{k,p}(\D,\mu)$ is a Banach space.
\end{teo}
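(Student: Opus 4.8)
The plan is to prove completeness of $W^{k,p}(\D,\mu)$ by the standard strategy: show that every absolutely convergent series of elements converges in the space, which for a seminormed space (after passing to equivalence classes, a normed space) is equivalent to completeness. So I would start with a sequence $\{f_n\}$ in $V^{k,p}(\D,\mu)$ with $\sum_n \|f_n\|_{W^{k,p}(\D,\mu)} < \infty$ and aim to produce a limit $f \in V^{k,p}(\D,\mu)$ of the partial sums. The key difficulty, compared with a classical Sobolev space, is that convergence in the seminorm only controls $f_n^{(j)}$ on $\supp(\mu_j)$ (in $L^p(\mu_j)$), whereas to even talk about the derivatives $f^{(j)}$ of the candidate limit we need pointwise/local control of $f_n^{(j)}$ on the open sets $\O^{(j)}$ of $j$-regular points, and we need the pasting (absolute continuity across contiguous intervals) to survive the limit.

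The core technical step is a local estimate: on any compact interval $I$ contained in a connected component of $\O^{(j)}$, one controls $\|f^{(j)}\|_{L^\infty(I)}$ (and hence $\|f^{(i)}\|_{L^\infty(I)}$ for $i \le j$, and the $AC_{loc}$ structure) by the $W^{k,p}(\D,\mu)$-seminorm. This is exactly the kind of Muckenhoupt-type / Sobolev-embedding estimate that underlies the whole theory: since on such an $I$ one has $w_i \in B_p(I)$ for some $i > j$, the inequality
$$
\|g\|_{L^\infty(I)} \le C\big(\|g^{(i-j)}\|_{L^p(I,w_i)} + \text{(lower order $L^p(\mu_\ell)$ terms)}\big)
$$
holds for $g = f^{(j)}$, via repeated application of the Muckenhoupt inequality quoted above together with the fact that $B_p$ membership makes $\int g'$ a bounded operator into $L^p(w)$. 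I would cite the analogous local estimate already established in \cite{RARP1} (or \cite[Theorem 4.2]{RS} itself, since this is precisely its proof) rather than reprove it. Granting this estimate, the partial sums $s_N = \sum_{n\le N} f_n$ form a Cauchy sequence in $L^\infty(I)$ for every such $I$, simultaneously at every level $j < k$; hence $s_N^{(j)}$ converges locally uniformly on $\O^{(j)}$ to a function which I call $f^{(j)}$, and locally uniform convergence of functions together with their derivatives preserves the $AC_{loc}(\O^{(j)})$ property and the identity $(f^{(j)})' = f^{(j+1)}$ on $\O^{(j+1)} \subseteq \O^{(j)}$. The $p$-admissibility hypothesis is what guarantees that the ``bad'' parts $\mu_j^*$ of the measures live outside $\O^{(j)}$, so that controlling $f^{(j)}$ on $\O^{(j)}$ plus the $L^p(\mu_j)$ bound on $\supp(\mu_j)$ is genuinely enough.

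It then remains to check that $f := \lim s_N$ (which exists a.e.-$\mu_0$ and pointwise on $\O^{(0)}$) lies in $V^{k,p}(\D,\mu)$ and that $\|f - s_N\|_{W^{k,p}(\D,\mu)} \to 0$. For the former, the regularity/pasting conditions $f^{(j)} \in AC_{loc}(\O^{(j)})$ have just been verified; for the norm, for each fixed $j$ one has $s_N^{(j)} \to f^{(j)}$ pointwise on $\O^{(j)}$ and (by the $p$-admissibility, $\mu_j$-a.e. off $\O^{(j)}$ there is nothing to check since $\mu_j^*(\RR\setminus\O^{(j)})=0$ and the absolutely continuous part is supported in $\O_j \subseteq \O^{(j)}$), so Fatou's lemma gives $\|f^{(j)}\|_{L^p(\mu_j)} \le \sum_n \|f_n^{(j)}\|_{L^p(\mu_j)} < \infty$ and, applied to the tails, $\|f^{(j)} - s_N^{(j)}\|_{L^p(\mu_j)} \le \sum_{n>N}\|f_n^{(j)}\|_{L^p(\mu_j)} \to 0$. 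Summing the $p$-th powers over $0 \le j \le k$ yields $\|f - s_N\|_{W^{k,p}(\D,\mu)} \to 0$, so the absolutely convergent series converges, and $W^{k,p}(\D,\mu)$ (as a space of equivalence classes under the seminorm) is a Banach space. The main obstacle is the local $L^\infty$ estimate on $\O^{(j)}$ and the careful bookkeeping of which measure controls which derivative on which set — but that estimate is precisely the content of the cited results, so in this paper the proof reduces to assembling it, and I would present it as a short deduction from \cite[Theorem 4.2]{RS} and \cite{RARP1}.
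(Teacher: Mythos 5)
The paper does not prove Theorem \ref{t:c1} at all: it imports it from \cite[Theorem 4.2]{RS}, remarking only that the proof is very long and technical and that \cite{RARP1} is devoted to a weak version. So your plan to ``present it as a short deduction from \cite[Theorem 4.2]{RS} and \cite{RARP1}'' coincides exactly with the paper's treatment (a citation), and at that level it is acceptable; but be aware that \cite[Theorem 4.2]{RS} \emph{is} the statement being proved, so as an independent argument your text is a sketch of the strategy of \cite{RARP1,RS} rather than a proof: the entire difficulty is the local estimate bounding $\|f^{(j)}\|_{L^\infty(I)}$ on compact subsets $I$ of $\O^{(j)}$ by the Sobolev seminorm (plus its uniformity bookkeeping), which you assert and defer.

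If you do expand the sketch, one step as written is wrong and needs repair. You claim that ``$\mu_j$-a.e.\ off $\O^{(j)}$ there is nothing to check since $\mu_j^*(\RR\setminus\O^{(j)})=0$ and the absolutely continuous part is supported in $\O_j\subseteq\O^{(j)}$.'' The paper's inclusion is $\O_{j+1}\cup\cdots\cup\O_k\subseteq\O^{(j)}$, i.e.\ $\O_j\subseteq\O^{(j-1)}$: membership of $w_j$ in $B_p$ near a point makes that point $(j-1)$-regular, not $j$-regular, so in general $\O_j\not\subseteq\O^{(j)}$ and the absolutely continuous part $w_j\chi_{_{\O_j}}dx$ of $\mu_j$ may charge points where your locally uniform convergence of $s_N^{(j)}$ is unavailable. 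There you must argue differently: from convergence in $L^p(w_j\,dx)$ extract a $\mu_j$-a.e.\ convergent subsequence, and identify its limit with $(f^{(j-1)})'$ by noting that $w_j^{-1}\in L^{1/(p-1)}_{loc}(\O_j)$ gives $s_N^{(j)}\to f^{(j)}$ in $L^1_{loc}(\O_j)$, so the limit of $s_N^{(j-1)}$ (already controlled on $\O^{(j-1)}\supseteq\O_j$) is locally absolutely continuous on $\O_j$ with the correct derivative; the same remark applies at level $k$, where no $AC$ condition is imposed but $f^{(k)}$ must still be identified a.e.\ on $\O_k$, and at level $0$, since $p$-admissibility places no restriction on $\mu_0$. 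This measure-by-measure identification is precisely where the ``very long and technical'' proofs in \cite{RARP1} and \cite{RS} spend their effort, so deferring to them is legitimate --- just not to the theorem itself, and not with the inclusion $\O_j\subseteq\O^{(j)}$.
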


We want to remark that the proof of Theorem \ref{t:c1} is very long and technical: the paper \cite{RARP1} is mainly
devoted to prove a weak version of Theorem \ref{t:c1}, and using this version, \cite{RS} provides a very technical proof of
the general case.

\smallskip

For each $1\le p<\infty$ and $p$-admissible vector measure $\mu$ in $\RR$, consider the Banach space $\prod_{j=0}^k
L^{p}(\D,\mu_j)$ with the norm
$$
\|f\|_{\prod_{j=0}^k L^{p}(\D,\mu_j)} = \Big( \sum_{j=0}^k \|f_j\|_{L^{p}(\mu_j)}^p \Big)^{1/p}
$$
for every $f=(f_0,f_1,\dots,f_k)\in \prod_{j=0}^k L^{p}(\D,\mu_j)$.

\begin{definition} For each $1\le p<\infty$ we denote by $q$ the conjugate or dual exponent of $p$, i.e., $1/p+1/q=1$.
Consider a $p$-admissible vector measure $\mu$. If $f\in \prod_{j=0}^k L^{p}(\D,\mu_j)$ and $g\in \prod_{j=0}^k
L^{q}(\D,\mu_j)$, we define the product $(f,g)$ as
$$
(f,g)=\sum_{j=0}^k \int_\D f_j \, g_j\, d\mu_j \,.
$$
Let us consider the projection $P:\,W^{k,p}(\D,\mu)\longrightarrow \prod_{j=0}^k L^{p}(\D,\mu_j)$, given by
$Pf=(f,f',\dots,f^{(k)})$.
\end{definition}

\begin{teo} \label{t:reflexive}
Let $1\le p<\infty$ and $\mu$ a $p$-admissible vector measure. Then $W^{k,p}(\D,\mu)$ is separable. Furthermore, if
$1< p<\infty$, then it is reflexive and uniformly convex.
\end{teo}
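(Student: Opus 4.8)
The plan is to realize $W^{k,p}(\D,\mu)$ as a closed subspace of the product space $\prod_{j=0}^k L^p(\D,\mu_j)$ via the projection $P$, and then transfer the desired properties (separability, reflexivity, uniform convexity) from the product space to the subspace. First I would observe that $P$ is an isometry from $W^{k,p}(\D,\mu)$ onto its image $P(W^{k,p}(\D,\mu))\subseteq \prod_{j=0}^k L^p(\D,\mu_j)$: indeed, by the very definition of the Sobolev norm, $\|f\|_{W^{k,p}(\D,\mu)} = \|Pf\|_{\prod_{j=0}^k L^p(\D,\mu_j)}$, and $P$ is injective on equivalence classes because $Pf = 0$ forces $f^{(j)} = 0$ in $L^p(\mu_j)$ for all $j$, i.e. $f$ is the zero class. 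Since $W^{k,p}(\D,\mu)$ is complete by Theorem \ref{t:c1}, its image under the isometry $P$ is a complete, hence closed, subspace of $\prod_{j=0}^k L^p(\D,\mu_j)$.

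Next I would invoke the standard functional-analytic facts about the product space. For $1\le p<\infty$, each $L^p(\D,\mu_j)$ is separable provided $\mu_j$ is $\sigma$-finite (which holds by $p$-admissibility), so the finite product $\prod_{j=0}^k L^p(\D,\mu_j)$ is separable; a closed subspace of a separable metric space is separable, giving separability of $W^{k,p}(\D,\mu)$ for all $1\le p<\infty$. For $1<p<\infty$, each $L^p(\D,\mu_j)$ is uniformly convex by the Clarkson inequalities, and a finite product of uniformly convex spaces, with the $\ell^p$-combination of norms, is again uniformly convex — this is where the choice of exponent $p$ in the definition of the product norm matters, and it must be checked (it follows from Clarkson's inequalities applied coordinatewise together with the elementary uniform convexity of $\ell^p_{k+1}$). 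Uniform convexity passes to closed subspaces trivially. By the Milman–Pettis theorem a uniformly convex Banach space is reflexive, and a closed subspace of a reflexive space is reflexive; alternatively one argues reflexivity of $\prod_{j=0}^k L^p(\D,\mu_j)$ directly from reflexivity of each factor and then restricts to the closed subspace.

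The main obstacle is the uniform convexity step: one must be careful that the norm on the product is the $\ell^p$-aggregate of the factor norms (as defined in the excerpt) rather than, say, the max or $\ell^1$ aggregate, since only the $\ell^p$ choice makes the product uniformly convex when each factor is. Concretely, I would record the claim that if $(X_j,\|\cdot\|_j)$, $0\le j\le k$, are uniformly convex with the same modulus-type estimate, then $X:=\prod_j X_j$ with $\|x\| = (\sum_j \|x_j\|_j^p)^{1/p}$ is uniformly convex, and prove it by combining Clarkson's inequality in each $X_j$ with Clarkson's inequality in the finite-dimensional $\ell^p$ space. Everything else — the isometry, the closedness, the descent to subspaces — is routine once Theorem \ref{t:c1} is in hand. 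A brief remark could also note that separability holds for $p=1$ as well, whereas reflexivity and uniform convexity genuinely require $1<p<\infty$, since $L^1$ and $L^\infty$ factors are generally neither.
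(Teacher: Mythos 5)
Your proposal is correct and follows essentially the same route as the paper: realize $W^{k,p}(\D,\mu)$ via the isometry $P$ as a closed subspace of $\prod_{j=0}^k L^{p}(\D,\mu_j)$ (closedness coming from completeness, Theorem \ref{t:c1}), and then transfer separability, and for $1<p<\infty$ reflexivity and uniform convexity, from the product to the closed subspace and back through the isometry. The only difference is cosmetic: where you sketch the uniform convexity of the $\ell^p$-product via Clarkson's inequalities and invoke Milman--Pettis, the paper simply cites the corresponding facts from the literature.
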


\begin{proof}
The map $P$ is an isometric embedding of $W^{k,p}(\D,\mu)$ onto $W:=P(W^{k,p}(\D,\mu))$. Then $W$ is a closed subspace
since $\prod_{j=0}^k L^{p}(\D,\mu_j)$ and $W^{k,p}(\D,\mu)$ are Banach spaces by Theorem \ref{t:c1}.

If $1\le p<\infty$, then each $L^{p}(\D,\mu_j)$ is separable; furthermore, if $1< p<\infty$, then it is reflexive and
uniformly convex. Then $\prod_{j=0}^k L^{p}(\D,\mu_j)$ is separable (and reflexive and uniformly convex if $1<
p<\infty$) by \cite[p.8]{A}.

Since $W$ is closed and $\prod_{j=0}^k L^{p}(\D,\mu_j)$ is separable, $W$ is separable; furthermore, if $1<
p<\infty$, then $W$ is reflexive and uniformly convex since $\prod_{j=0}^k L^{p}(\D,\mu_j)$ is reflexive and uniformly
convex (see \cite[p.7]{A} and \cite{H}). Since $W$ and $W^{k,p}(\D,\mu)$ are isometric, $W^{k,p}(\D,\mu)$ also has these
properties.
\end{proof}

\begin{teo}
\label{t:dual}
Let $1\le p<\infty$, $q$ the dual exponent of $p$, and $\mu$ a $p$-admissible vector measure. Consider the canonical
map $J:\,\prod_{j=0}^k L^{q}(\D,\mu_j) \longrightarrow (W^{k,p}(\D,\mu))'$ defined by $J(v)=(\cdot,v)$, i.e.,
$\big(J(v)\big)(f)=(Pf,v)$. Then giving any $T \in (W^{k,p}(\D,\mu))'$ there exists $v \in \prod_{j=0}^k
L^{q}(\D,\mu_j)$ with
\begin{equation} \label{eq:dual}
T=J(v) \qquad and \qquad \big\|T\big\|_{(W^{k,p}(\D,\mu))'} = \|v\|_{\prod_{j=0}^k L^{q}(\D,\mu_j)} .
\end{equation}
Furthermore, if $1< p<\infty$, then there exists a unique $v \in \prod_{j=0}^k L^{q}(\D,\mu_j)$ verifying \eqref{eq:dual}.
\end{teo}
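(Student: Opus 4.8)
The plan is to transfer the duality statement for the product space $\prod_{j=0}^k L^{p}(\D,\mu_j)$ to $W^{k,p}(\D,\mu)$ by means of the isometric embedding $P$ and the Hahn--Banach theorem. First I would recall from the proof of Theorem \ref{t:reflexive} that $P:W^{k,p}(\D,\mu)\to \prod_{j=0}^k L^{p}(\D,\mu_j)$ is an isometry onto the closed subspace $W:=P(W^{k,p}(\D,\mu))$; this is where Theorem \ref{t:c1} (completeness) is used, and it is the structural fact that makes everything else work. Next I would invoke the classical representation of the dual of $\prod_{j=0}^k L^{p}(\D,\mu_j)$: since each $\mu_j$ is $\s$-finite, $\big(L^{p}(\D,\mu_j)\big)' = L^{q}(\D,\mu_j)$ isometrically for $1\le p<\infty$, and hence $\big(\prod_{j=0}^k L^{p}(\D,\mu_j)\big)' = \prod_{j=0}^k L^{q}(\D,\mu_j)$ with the pairing $(f,g)=\sum_{j=0}^k \int_\D f_j g_j\, d\mu_j$ and the dual norm $\|\cdot\|_{\prod_{j=0}^k L^{q}(\D,\mu_j)}$; this is again \cite[p.8]{A}.

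Then I would argue as follows. Given $T\in (W^{k,p}(\D,\mu))'$, transport it to a functional on $W$ via the isometry: $\widetilde T := T\circ P^{-1}\in W'$ with $\|\widetilde T\|_{W'}=\|T\|_{(W^{k,p}(\D,\mu))'}$. By the Hahn--Banach theorem, extend $\widetilde T$ to a functional $S$ on all of $\prod_{j=0}^k L^{p}(\D,\mu_j)$ with $\|S\|=\|\widetilde T\|_{W'}$. By the dual representation of the product space, $S=(\cdot,v)$ for some $v\in \prod_{j=0}^k L^{q}(\D,\mu_j)$ with $\|v\|_{\prod_{j=0}^k L^{q}(\D,\mu_j)}=\|S\|=\|T\|_{(W^{k,p}(\D,\mu))'}$. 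Restricting back to $W$ and composing with $P$ yields, for every $f\in W^{k,p}(\D,\mu)$,
$$
T(f) = S(Pf) = (Pf,v) = \big(J(v)\big)(f),
$$
so $T=J(v)$. On the other hand $\|J(v)\|_{(W^{k,p}(\D,\mu))'} \le \|v\|_{\prod_{j=0}^k L^{q}(\D,\mu_j)}$ always holds (it is just Hölder's inequality applied coordinatewise together with the definition of the $W^{k,p}$-norm via $P$), and combined with $\|v\| = \|T\| = \|J(v)\|$ this gives the norm identity \eqref{eq:dual}.

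For the uniqueness claim when $1<p<\infty$: here $W^{k,p}(\D,\mu)$ is reflexive by Theorem \ref{t:reflexive}, but the cleanest route is to observe that $J$ is injective on $\prod_{j=0}^k L^{q}(\D,\mu_j)$ in this range. Indeed, if $J(v)=0$, then $(Pf,v)=0$ for all $f\in W^{k,p}(\D,\mu)$; since $P$ is onto $W$ this says $v\perp W$, i.e. $(h,v)=0$ for all $h\in W$. The subtlety is that $v$ need only vanish against $W$, not against the whole product space, so injectivity is \emph{not} automatic — it requires that the coordinates of elements of $W$ are rich enough. I would handle this by noting that $W$ contains, for each index $j$, all vectors of the form $Pf$ where $f$ ranges over a dense-in-$L^p$ family obtained from smooth compactly supported functions supported in the regular sets $\O^{(j)}$; more precisely, one uses that the orthogonality $(h,v)=0$ on $W$ forces each component $v_j$ to vanish $\mu_j$-a.e., because a function supported where $\mu_j$ lives and appropriately localized can be realized (up to the pasting/absolute-continuity constraints) as the $j$-th derivative of an element of $W^{k,p}(\D,\mu)$. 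I expect this density/surjectivity-of-components argument to be the main obstacle, since it is exactly the place where the technical structure of $p$-admissible measures and the sets $\O^{(j)}$ enters; for $p=1$ this fails in general (the dual of $L^1$ is $L^\infty$ and extensions need not be unique), which is why uniqueness is only asserted for $1<p<\infty$. An alternative, if one wants to avoid the explicit density argument: uniqueness of $v$ for $1<p<\infty$ also follows from strict convexity of $\prod_{j=0}^k L^{q}(\D,\mu_j)$ together with the fact that the Hahn--Banach extension of a norm-attaining functional on a subspace of a space with strictly convex dual is unique — but this still requires identifying the dual, so I would present the direct argument.
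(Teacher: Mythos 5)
Your existence half (the isometric embedding $P$, Hahn--Banach extension of $T\circ P^{-1}$ to the product space, the identification $\big(\prod_{j=0}^k L^{p}(\D,\mu_j)\big)'=\prod_{j=0}^k L^{q}(\D,\mu_j)$, and the H\"older bound $\|J(v)\|\le\|v\|$ to get the norm equality) is exactly the paper's argument and is correct. The gap is in the uniqueness half, in the route you say you would actually present. The theorem does not assert that $J$ is injective; it asserts that there is a unique $v$ satisfying \emph{both} conditions in \eqref{eq:dual}, i.e.\ a unique representer whose norm equals $\|T\|$. By trying to show $J(v)=0\Rightarrow v=0$ you are attempting a strictly stronger statement, and that statement is false in general, so the density argument you sketch cannot be repaired. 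Concretely, take $k=1$ and $d\mu_0=d\mu_1=\chi_{[0,1]}(x)\,dx$, so that $W^{1,p}(\D,\mu)$ is the classical Sobolev space on $[0,1]$ and $\mu$ is $p$-admissible. Pick a nonzero $v_1\in C_c^\infty((0,1))$ and set $v_0:=v_1'$. Then for every $f\in W^{1,p}(\D,\mu)$,
$$
(Pf,v)=\int_0^1 f\,v_1'\,dx+\int_0^1 f'\,v_1\,dx=\int_0^1 (f\,v_1)'\,dx=0 ,
$$
so $J(v)=0$ although $v\neq 0$. The flaw in your heuristic is that the elements of $W=P(W^{k,p}(\D,\mu))$ are coherent tuples $(f,f',\dots,f^{(k)})$: realizing a localized function as the $j$-th derivative of a Sobolev function never lets you isolate the $j$-th coordinate, because the lower-order coordinates come along and produce exactly the cancellations above. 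Hence $(h,v)=0$ for all $h\in W$ does not force the components $v_j$ to vanish $\mu_j$-a.e.

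The ``alternative'' you mention and set aside is in fact the correct argument, and essentially the paper's: the set $U=\{u\in\prod_{j=0}^k L^{q}(\D,\mu_j)\,:\,J(u)=T\}$ is closed and convex, $\|T\|_{(W^{k,p}(\D,\mu))'}=\min\{\|u\|\,:\,u\in U\}$ by \eqref{eq:holder2} and \eqref{eq:holder3}, and uniform (or strict) convexity of $\prod_{j=0}^k L^{q}(\D,\mu_j)$ for $1<p<\infty$ gives a unique minimum-norm element of $U$, which is precisely the unique $v$ verifying \eqref{eq:dual}; equivalently, strict convexity of the dual of the product space yields uniqueness of the norm-preserving Hahn--Banach extension of $T\circ P^{-1}$ (the ``norm-attaining'' proviso is not needed). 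Had you presented that argument as the main one instead of the injectivity/density argument, your proof would match the paper's and be complete; also note this explains correctly why uniqueness is restricted to $1<p<\infty$, rather than any failure of density for $p=1$.
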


\begin{proof}
First of all we will prove that $J$ is, in fact, a map $J:\,\prod_{j=0}^k L^{q}(\D,\mu_j) \longrightarrow
(W^{k,p}(\D,\mu))'$. Given $v \in \prod_{j=0}^k L^{q}(\D,\mu_j)$, consider $J(v)$.
Then continuous and discrete H\"older's inequalities give
$$
\begin{aligned}
\big|\big(J(v)\big)(f)\big| &
= \big|(Pf,v)\big|
\le \sum_{j=0}^k \int_\D \big| f^{(j)} \, v_j \big| \, d\mu_j \le \sum_{j=0}^k
\|f^{(j)}\|_{L^{p}(\mu_j)} \|v_j\|_{L^{q}(\mu_j)}
\\
& \le \Big( \sum_{j=0}^k \|f^{(j)}\|_{L^{p}(\mu_j)}^p \Big)^{1/p} \Big( \sum_{j=0}^k \|v_j\|_{L^{q}(\mu_j)}^q
\Big)^{1/q} =\|f\|_{W^{k,p}(\D,\mu)} \|v\|_{\prod_{j=0}^k L^{q}(\D,\mu_j)} .
\end{aligned}
$$
Hence,
\begin{equation} \label{eq:holder2}
\big\|J(v)\big\|_{(W^{k,p}(\D,\mu))'}
\le \|v\|_{\prod_{j=0}^k L^{q}(\D,\mu_j)} .
\end{equation}
Thus we have proved that $J:\,\prod_{j=0}^k L^{q}(\D,\mu_j) \longrightarrow (W^{k,p}(\D,\mu))'$. Let us prove now that
$J$ is onto.

Consider $T \in (W^{k,p}(\D,\mu))'$. The map $P$ is an isometric isomorphism of $W^{k,p}(\D,\mu)$ onto
$W:=P(W^{k,p}(\D,\mu))$. Then $T\circ P^{-1} \in W'$ and
$$
\big\|T\circ P^{-1}\big\|_{W'} = \big\|T \big\|_{\big(W^{k,p}(\D,\mu)\big)'}.
$$
Since $W$ is a subspace of $\prod_{j=0}^k L^{p}(\D,\mu_j)$, by Hahn-Banach
Theorem there exists
$$
T_0 \in \Big(\prod_{j=0}^k L^{p}(\D,\mu_j)\Big)' = \prod_{j=0}^k \big( L^{p}(\D,\mu_j)\big)' = \prod_{j=0}^k
L^{q}(\D,\mu_j) ,
$$
with
$$
T_0|_{W'}=T\circ P^{-1},
\qquad
\big\|T_0\big\|_{\big(\prod_{j=0}^k L^{p}(\D,\mu_j)\big)'}=\big\|T\circ P^{-1}\big\|_{W'}.
$$
Therefore, there exists $v \in \prod_{j=0}^k L^{q}(\D,\mu_j)$ with $v \simeq T_0$, i.e.,
$$
T_0(f)= (f,v), \qquad \forall f\in W,
$$
and
\begin{equation} \label{eq:holder3}
\|v\|_{\prod_{j=0}^k L^{q}(\D,\mu_j)}= \big\|T_0\big\|_{\big(\prod_{j=0}^k L^{p}(\D,\mu_j)\big)'}=\big\|T\circ
P^{-1}\big\|_{W'}
 = \big\|T \big\|_{\big(W^{k,p}(\D,\mu)\big)'}
\,.
\end{equation}

Hence,
$$
T(f)= T_0(Pf)= (Pf,v)= \big(J(v)\big)(f), \qquad \forall f\in W^{k,p}(\D,\mu),
$$
$T=J(v)$ and $J$ is onto.

\smallskip

If $1<p<\infty$, let us consider the set $U:=\{u \in \prod_{j=0}^k L^{q}(\D,\mu_j) \, : \,\, T=J(u)\}$. Note that \eqref{eq:holder2} and
\eqref{eq:holder3} give
$$
\begin{aligned}
\big\|T\big\|_{(W^{k,p}(\D,\mu))'}
& = \inf \Big\{ \|u\|_{\prod_{j=0}^k L^{q}(\D,\mu_j)} \, : \, \, u\in U \Big\}
\\ & =
\min \Big\{ \|u\|_{\prod_{j=0}^k L^{q}(\D,\mu_j)} \, : \, \, u\in U \Big\}.
\end{aligned}
$$
It is easy to check that $U$ is a closed convex set in $\prod_{j=0}^k L^{q}(\D,\mu_j)$.
Since $\prod_{j=0}^k L^{q}(\D,\mu_j)$ is uniformly convex, this minimum is attained at a unique $u_0\in U$ (see, e.g., \cite[p.22]{Ch}), and
\eqref{eq:holder3} gives $u_0=v$.
\end{proof}

\end{document}